\DeclareMathOperator{\Bog}{Bog}
\DeclareMathOperator{\Div}{div}
\newcommand{\R}{\mathbb R}
\newcommand{\N}{\mathbb N}
\newcommand{\dt}{\,\mathrm{d} t}
\newcommand{\dx}{\,\mathrm{d}x}
\newcommand{\dz}{\,\mathrm{d}z}
\newcommand{\dy}{\,\mathrm{d}y}
\newcommand{\dxt}{\,\mathrm{d}x\,\mathrm{d}t}
\newcommand{\ds}{\,\mathrm{d}\sigma}
\newcommand{\dxs}{\,\mathrm{d}x\,\mathrm{d}\sigma}
\newcommand{\mt}{\Omega}
\newtheorem{theorem}{Theorem}[section]
\newtheorem{lemma}[theorem]{Lemma}
\newtheorem{corollary}[theorem]{Corollary}
\newtheorem{remark}[theorem]{Remark}
\theoremstyle{definition}
\newtheorem{definition}[theorem]{Definition}
\begin{document}

\title[Boundary regularity Navier--Stokes]
{Partial boundary regularity for the Navier--Stokes equations in time-dependent domains}

\author{Dominic Breit}
\address{Institute of Mathematics, TU Clausthal, Erzstra\ss e 1, 38678 Clausthal-Zellerfeld, Germany}
\email{dominic.breit@tu-clausthal.de}



\subjclass[2020]{76D03; 76D05; 35B65, 35Q30 ; 35Q74}

\date{\today}


\keywords{}

\begin{abstract}
We consider the incompressible Navier--Stokes equations in a moving domain whose boundary is prescribed by a function $\eta=\eta(t,y)$ (with $y\in\R^2$) of low regularity. This is motivated by problems from fluid-structure interaction.
We prove partial boundary regularity for boundary suitable weak solutions assuming that $\eta$ is continuous in time with values in the fractional Sobolev space $W^{2-1/p,p}_y$ for some $p>15/4$ and we have $\partial_t\eta\in L_t^{3}(W^{1,q_0}_y)$ for some $q_0>2$.

The existence of boundary suitable weak solutions is a consequence of a new maximal regularity result for the Stokes equations in moving domains which is of independent interest.
\end{abstract}

\maketitle

\section{Introduction}
We consider the incompressible Navier--Stokes equations
 \begin{align} \partial_t \bfu+(\bfu\cdot\nabla)\bfu&=\Delta\bfu-\nabla \pi+\bff,\quad \Div\bfu=0,&\label{1}
 \end{align}
in a moving domain $\Omega_{\eta(t)}$ whose boundary is prescribed by a function $\eta$ which deforms the boundary of the reference domain $\Omega\subset\R^3$ in normal direction. 
For technical simplification we suppose that $\eta$ is defined on $\mathcal I \times \omega$ where $\mathcal I=(0,T)$ for some given $T>0$ and we identify $\partial\Omega$ with the two-dimensional torus denoted by $\omega$.
Velocity field $\bfu$ and pressure $\pi$ in \eqref{1} are defined in the deformed space-time cylinder
$$\mathcal I\times\Omega_\eta:=\bigcup_{t\in \mathcal I}\set{t}\times\Omega_{\eta(t)}\subset\R^{1+3}.$$

 This is motivated by applications in fluid-structure interaction on which we comment more below.
The quantity $\bff:\mathcal I\times\Omega_\eta\rightarrow \R^3$ in \eqref{1} is an external forcing.
 
\textbf{Interior regularity}. The question about the interior regularity of solutions to \eqref{1} (for which it does not matter if the boundary is moving) is a big open problem and the best known result is partial regularity, i.e., that the velocity field is locally bounded/H\"older continuous outside a negligible set of the space-time cylinder (further regularity properties inside this set can be deduced) with measure zero. The first result in this direction was proved in \cite{Sh1}. Eventually, in \cite{CKN} the understanding was deepened by introducing so-called suitable weak solutions, which satisfy 
the local energy inequality
 \begin{equation}\label{energylocal0a}
\begin{split}
\int\frac{1}{2}&\zeta\big| \bfu(t)\big|^2\dx+\int_0^t\int\zeta|\nabla \bfu |^2\dx\ds\\& \leq\int_0^t\int\frac{1}{2}\Big(| \bfu|^2(\partial_t\zeta+\Delta\zeta)+\big(|\bfu|^2+2\pi\big)\bfu\cdot\nabla\zeta\Big)\dx\ds+\int_0^t\int\zeta\bff\cdot\bfu\dxs
\end{split}
\end{equation}
for any function $\zeta$ (which is smooth, compactly supported and non-negative). This enabled a local regularity theory, see also \cite{CL,LaSe,L}.

\textbf{Fixed domains}.
The results mentioned so far concern the interior regularity of solutions. In the case of flat boundaries the regularity analysis at boundary points has ben started in \cite{Sh4} and continued further, e.g. in \cite{Se1} and \cite{Se3}. First results for smooth curved boundaries are shown in \cite{SeShSo}.
The question about partial regularity in the case of irregular domains has been addressed only very recently in  \cite{Br2}, where the author showed a corresponding result assuming that the local charts in the parametrisation of the boundary belong to the fractional Sobolev space
\begin{align}\label{ass:main}  
W^{2-\frac{1}{p},p}(\R^2)\quad\text{for some}\quad p>\tfrac{15}{4}.
\end{align}
The analysis in \cite{SeShSo} and \cite{Br2} is based on boundary suitable weak solutions satisfying a version of \eqref{energylocal0a} for boundary points and the local transformation of the momentum equation to a flat geometry. By employing Sobolev multipliers, non-smooth domains (leading to non-smooth coefficients in the transformed equation) can be included in the theory in \cite{Br2}.

 \textbf{Moving domains}. The purpose of this paper is to develop a partial boundary regularity theory for solutions to \eqref{1} in moving domains under low regularity assumptions on the function $\eta$ describing the moving boundary.
In a first step we prove the existence of boundary suitable weak solutions, where the main ingredient is a maximal regularity theory for the Stokes system in moving domains.
The precise statement, which we believe to be of independent interest, is given in Theorem \ref{thm:stokesunsteadymoving}.  
Eventually, we turn to the partial regularity proof, which is based on the classical blow-up technique. As in  \cite{SeShSo} and \cite{Br2} we transform the problem locally around a boundary point to a flat geometry obtaining a perturbed system, where the coefficients now also depend on time.
 This requires to re-parametrise the domain by local charts in order to obtain a small Sobolev multiplier norm (uniformly in time) of the coefficients, cf. Section \ref{sec:pert}. 
Our main results in Theorems \ref{thm:main} and \ref{thm:main} state that the velocity field is regular in a.a. boundary point of the moving domain. For this we require that the function $\eta$ is continuous in time with values in the fractional Sobolev space from \eqref{ass:main}
and satisfies additionally
\begin{align}\label{eq:assdteta}
\partial_t\eta\in L^{3}(\mathcal I,W^{1,q_0}(\omega)),\quad q_0>2.
\end{align}
The assumption on the spatial regularity is the natural generalisation of that from \cite{Br2} for fixed domains.
The reason for assumption \eqref{eq:assdteta} is the introduction of local coordinates and, in particular, the rotation of the coordinate system, see Section \ref{sec:repara} for details. This rotation is based
on the normal of the moving domain  (which behaves as $\nabla_y\eta$) averaged in a small spatial region.   Controlling its time-derivative, which naturally enters the picture,  the latter requires \eqref{eq:assdteta}. This assumption is crucial, in various estimates in the proof of Theorem \ref{thm:stokesunsteadymoving}. For the partial regularity result itself we can slightly weaken \eqref{eq:assdteta} to $\partial_t\eta\in L^{3}(\mathcal I; L^{3}\cap W^{1,1}(\omega))$. However, in that the case the existence of suitable weak solutions remains unclear, cf. Remark \ref{rem:assdteta}.

 Below we comment on how \eqref{eq:assdteta}
relates to problems from fluid-structure interaction.

\textbf{Fluid-structure interaction}.
In fluid-structure interaction the function $\eta$ is the displacement of an elastic structure which occupies (a part of) the boundary of the fluid domain.
It is the solution to an evolutionary PDE in its own right and hence of limited regularity. 
A proto-typical example is
\begin{align}\label{eq:eta}
\varrho\partial_t^2\eta-\alpha\partial_t\Delta\eta+\beta\Delta^2\eta=\bfF\quad\text{in}\quad \mathcal I\times \omega
\end{align}
with $\varrho,\alpha,\beta>0$.
The function $\bfF$ on the right-hand side describes the response of the structure to the surface forces of the fluid imposed by the Cauchy stress. Weak solutions to \eqref{eq:eta} belong to the class
\begin{align}\label{eq:2012}
W^{1,\infty}(\mathcal I;L^2(\omega))\cap W^{1,2}(\mathcal I;W^{1,2}(\omega))\cap L^\infty(\mathcal I;W^{2,2}(\omega)),
\end{align}
cf. \cite{LeRu}.
Note that the estimate in the second function space is a consequence of the dissipation in \eqref{eq:eta} (that is, $\alpha>0$) and is not available in the purely elastic case studied in \cite{LeRu}.
Arguing as in
\cite{MuSc} and using the regularising effect from the dissipation one can prove that solutions satisfy additionally
\begin{align}\label{eq:1401b}
\eta\in L^2(\mathcal I;W^{3,2}(\omega)).
\end{align}Due to the compact embeddings
\begin{align}\label{eq:0712}
W^{3,2}(\omega)\hookrightarrow\hookrightarrow W^{1,\infty}(\omega),\quad W^{3,2}(\omega)\hookrightarrow\hookrightarrow W^{2-1/p,p}(\omega),
\end{align}
for some $p>\tfrac{15}{4}$ we obtain $\mathrm{Lip}(\partial\Omega_{\eta(t)})\ll 1$ and $\partial\Omega_{\eta(t)}\in W^{2-1/p,p}(\R^2)$.
But, unfortunately, $\eta$ does not belong to these spaces uniformly in time as required in Theorems \ref{thm:main} and \ref{thm:main'}. Also, the temporal regularity in \eqref{eq:2012} is slightly below that in \eqref{eq:assdteta}
required for the partial regularity theory. In conclusion, the regularity of a weak solution to \eqref{eq:eta} is somewhat below what is required for the partial regularity of solutions to \eqref{1}. 

In fluid structure-interaction one typically has a coupling condition between the velocity field and the shell deformation at the moving boundary and hence non-trivial boundary conditions (which depend on time) for the momentum equation. We do not investigate the effect of irregular (time-dependent) boundary data in this paper, which would require a non-trivial extension of our theory.

\section{Preliminaries and results}
\subsection{Conventions}
We write $f\lesssim g$ for two non-negative quantities $f$ and $g$ if there is a $c>0$ such that $f\leq\,c g$. Here $c$ is a generic constant which does not depend on the crucial quantities and can change from line to line. If necessary we specify particular dependencies. We write $f\approx g$ if $f\lesssim g$ and $g\lesssim f$.
We do not distinguish in the notation for the function spaces between scalar- and vector-valued functions. However, vector-valued functions will usually be denoted in bold case.

\subsection{Classical function spaces}
Let $\mathcal O\subset\R^m$, $m\geq 1$, be open.
Function spaces of continuous or $\alpha$-H\"older continuous functions, $\alpha\in(0,1)$,
 are denoted by $C(\overline{\mathcal O})$ or $C^{0,\alpha}(\overline{\mathcal O})$ respectively. Similarly, we write $C^1(\overline{\mathcal O})$ and $C^{1,\alpha}(\overline{\mathcal O})$.
We denote as usual by $L^p(\mathcal O)$ and $W^{k,p}(\mathcal O)$ for $p\in[1,\infty]$ and $k\in\mathbb N$ Lebesgue- and Sobolev spaces over $\mathcal O$. For a bounded domain $\mathcal O$ the space $L^p_\perp(\mathcal O)$ denotes the subspace of  $L^p(\mathcal O)$ of functions with zero mean, that is $(f)_{\mathcal O}:=\dashint_{\mathcal O}f\dx:=\mathcal L^m(\mathcal O)^{-1}\int_{\mathcal O}f\dx=0$.
 We denote by $W^{k,p}_0(\mathcal O)$ the closure of the smooth and compactly supported functions in $W^{k,p}(\mathcal O)$. This coincides with the functions vanishing $\mathcal H^{m-1}$ -a.e. on $\partial\mathcal O$ provided $\partial\mathcal O$ is sufficiently regular. 
 We also denote by $W^{-k,p}(\mathcal O)$ the dual of $W^{k,p}_0(\mathcal O)$.
  Finally, we consider the subspace
$W^{1,p}_{0,\Div}(\mathcal O)$ of divergence-free vector fields which is defined accordingly. 
We will use the shorthand notations $L^p_x$ and $W^{k,p}_x$ in the case of $3$-dimensional domains and   
$L^p_y$ and $W^{k,p}_y$ for $2$-dimensional sets.

For a separable Banach space $(X,\|\cdot\|_X)$ we denote by $L^p(0,T;X)$ the set of (Bochner-) measurable functions $u:(0,T)\rightarrow X$ such that the mapping $t\mapsto \|u(t)\|_{X}\in L^p(0,T)$. 
The set $C([0,T];X)$ denotes the space of functions $u:[0,T]\rightarrow X$ which are continuous with respect to the norm topology on $(X,\|\cdot\|_X)$.
 The space $W^{1,p}(0,T;X)$ consists of those functions from $L^p(0,T;X)$ for which the distributional time derivative belongs to $L^p(0,T;X)$ as well. 
We use the shorthand $L^p_tX$ for $L^p(0,T;X)$. For instance, we write $L^p_tW^{1,p}_x$ for $L^p(0,T;W^{1,p}(\mathcal O))$. Similarly, $W^{k,p}_tX$ stands for $W^{k,p}(0,T;X)$.

The space $C^{\alpha,\beta}([0,T]\times \overline{\mathcal O})$ with $\alpha,\beta\in(0,1]$ denotes the set of functions being $\alpha$-H\"older continuous
in $t\in[0,T]$ and $\beta$-H\"older continuous in $x\in\overline{\mathcal O}$.

\subsection{Fractional differentiability and Sobolev mulitpliers}
\label{sec:SM}
For $p\in[1,\infty)$ the fractional Sobolev space (Sobolev-Slobodeckij space) with differentiability $s>0$ with $s\notin\mathbb N$ will be denoted by $W^{s,p}(\mathcal O)$. For $s>0$ we write $s=\lfloor s\rfloor+\lbrace s\rbrace$ with $\lfloor s\rfloor\in\N_0$ and $\lbrace s\rbrace\in(0,1)$.
 We denote by $W^{s,p}_0(\mathcal O)$ the closure of the smooth and compactly supported functions in $W^{s,p}(\mathcal O)$. For $s>\frac{1}{p}$ this coincides with the functions vanishing $\mathcal H^{m-1}$ -a.e. on $\partial\mathcal O$ provided $\partial\mathcal O$ is regular enough. We also denote by $W^{-s,p}(\mathcal O)$ for $s>0$ the dual of $W^{s,p}_0(\mathcal O)$. Similar to the case of unbroken differentiabilities above we use the shorthand notations $W^{s,p}_x$  and $W^{s,p}_y$.
We will denote by $B^s_{p,q}(\R^m)$ the standard Besov spaces on $\R^m$ with differentiability $s>0$, integrability $p\in[1,\infty]$ and fine index $q\in[1,\infty]$. They can be defined (for instance) via Littlewood-Paley decomposition leading to the norm $\|\cdot\|_{B^s_{p,q}(\R^m)}$. 
 We refer to \cite{RuSi} and \cite{Tr,Tr2} for an extensive picture. 
 The Besov spaces $B^s_{p,q}(\mathcal O)$ for a bounded domain $\mathcal O\subset\R^m$ are defined as the restriction of functions from $B^s_{p,q}(\R^m)$, that is
 \begin{align*}
 B^s_{p,q}(\mathcal O)&:=\{f|_{\mathcal O}:\,f\in B^s_{p,q}(\R^m)\},\\
 \|g\|_{B^s_{p,q}(\mathcal O)}&:=\inf\{ \|f\|_{B^s_{p,q}(\R^m)}:\,f|_{\mathcal O}=g\}.
 \end{align*}
 If $s\notin\mathbb N$ and $p\in(1,\infty)$ we have $B^s_{p,p}(\mathcal O)=W^{s,p}(\mathcal O)$.
 
In accordance with \cite[Chapter 14]{MaSh} the Sobolev multiplier norm  is given by
\begin{align}\label{eq:SoMo}
\|\varphi\|_{\mathcal M^{s,p}(\mathcal O)}:=\sup_{\bfv:\,\|\bfv\|_{W^{s-1,p}(\mathcal O)}=1}\|\nabla\varphi\cdot\bfv\|_{W^{s-1,p}(\mathcal O)},
\end{align}
where $p\in[1,\infty]$ and $s\geq1$.
The space $\mathcal M^{s,p}(\mathcal O)$ of Sobolev multipliers is defined as those objects for which the $\mathcal M^{s,p}(\mathcal O)$-norm is finite. For $\delta>0$ we denote by  $\mathcal M^{s,p}(\mathcal O)(\delta)$ the subset of functions from
 $\mathcal M^{s,p}(\mathcal O)$ with $\mathcal M^{s,p}(\mathcal O)$-norm not exceeding $\delta$.
By mathematical induction with respect to $s$ one can prove for Lipschitz-continuous functions $\varphi$ that membership to $\mathcal M^{s,p}(\mathcal O)$  in the sense of \eqref{eq:SoMo} implies that
\begin{align}\label{eq:SoMo'}
\sup_{w:\,\|w\|_{W^{s,p}(\mathcal O)}=1}\|\varphi \,w\|_{W^{s,p}(\mathcal O)}<\infty.
\end{align}
The quantity \eqref{eq:SoMo'} also serves as customary definition of the Sobolev multiplier norm in the literature but \eqref{eq:SoMo} is more suitable for our purposes.

Let us finally collect some some useful properties of Sobolev multipliers.
By \cite[Corollary 14.6.2]{MaSh} we have
\begin{align}\label{eq:MSa}
\|\phi\|_{\mathcal M^{s,p}(\R^{m})}\lesssim\|\nabla\phi\|_{L^{\infty}(\R^m)},
\end{align}
provided that one of the following conditions holds:
\begin{itemize}
\item $p(s-1)<m$ and $\phi\in B^{s}_{\varrho,p}(\R^{m})$ with $\varrho\in\big[\frac{m}{s-1},\infty\big]$;
\item $p(s-1)=m$ and $\phi\in B^{s}_{\varrho,p}(\R^m)$ with $\varrho\in(p,\infty]$.
\end{itemize}
Note that the hidden constant in \eqref{eq:MSa} depends on the $B^{s}_{\varrho,p}(\R^{m})$-norm of $\phi$.
By \cite[Corollary 4.3.8]{MaSh} it holds
\begin{align}\label{eq:MSb}
\|\phi\|_{\mathcal M^{s,p}(\R^{m})}\approx
\|\nabla\phi\|_{W^{s-1,p}(\R^{m})} 
\end{align}
for $p(s-1)>m$. 
 Finally, we note the following rule about the composition with Sobolev multipliers which is a consequence of \cite[Lemma 9.4.1]{MaSh}. For open sets $\mathcal O_1,\mathcal O_2\subset\R^m$, $u\in W^{s,p}(\mathcal O_2)$ and a Lipschitz continuous function $\bfphi:\mathcal O_1\rightarrow\mathcal O_2$ with $\bfphi\in \mathcal M^{s,p}(\mathcal O_1)$ and Lipschitz continuous inverse $\bfphi^{-1}:\mathcal O_2\rightarrow\mathcal O_1$ we have
\begin{align}\label{lem:9.4.1}
\|u\circ\bfphi\|_{W^{s,p}(\mathcal O_1)}\lesssim \|u\|_{W^{s,p}(\mathcal O_2)}
\end{align}
with constant depending on $\bfphi$. 

\subsection{Function spaces on variable domains}
\label{ssec:geom}
 The spatial reference domain $\Omega$ is assumed to be an open bounded subset of $\mathbb{R}^3$ with smooth boundary and an outer unit normal ${\bfn}$. We assume that
 $\partial\Omega$ can be parametrised by an injective mapping ${\bfvarphi}\in C^k(\omega;\R^3)$ for some sufficiently large $k\in\N$. We suppose for all points $y=(y_1,y_2)\in \omega$ that the pair of vectors  
$\partial_i {\bfvarphi}(y)$, $i=1,2,$ are linearly independent.
 For a point $x$ in the neighborhood
or $\partial\Omega$ we can define the functions $y$ and $s$ by
\begin{align*}
 y(x)=\arg\min_{y\in\omega}|x-\bfvarphi(y)|,\quad s(x)=(x-\bfp(x))\cdot\bfn(y(x)).
 \end{align*}
Here we used the projection $\bfp(x)=\bfvarphi(y(x))$. We define $L>0$ to be the largest number such that $s,y$ and $\bfp$ are well-defined in $S_L$, where
\begin{align}
\label{eq:boundary1}
S_L=\{x\in\R^3:\,\mathrm{dist}(x,\partial\Omega)<L\}.
\end{align}
Due to the smoothness of $\partial\Omega$ for $L$ small enough we have $\abs{s(x)}=\min_{y\in\omega}|x-\bfvarphi(y)|$ for all $x\in S_L$. This implies that $S_L=\{s\bfn(y)+\bfvarphi(y):(s,y)\in (-L,L)\times \omega\}$.
%
For a given function $\eta :\mathcal I \times \omega \rightarrow\R$ we parametrise the deformed boundary by
\begin{align}\label{eq:bfvarphi}
{\bfvarphi}_\eta(t,y)={\bfvarphi}(y) + \eta(t,y){\bfn}(y), \quad \,y \in \omega,\,t\in \overline{\mathcal I}.
\end{align}
By possibly decreasing $L$, one easily deduces from this formula that $\Omega_{\eta}$ does not degenerate, that is
\begin{align}\label{eq:1705}
\partial_1\bfvarphi_\eta\times\partial_2\bfvarphi_\eta(t,y)&>0 \quad  \,y \in \omega,\,t\in \overline{\mathcal I},
\end{align}
provided $\|\eta\|_{L^\infty_{t,y}}<L$ and $\|\nabla\eta\|_{L^\infty_{t,y}}<L$. Here $\bfn_{\eta(t)}$
is the normal of the domain $\Omega_{\eta(t)}$
 defined through
\begin{align}\label{eq:2612}
\partial\Omega_{\eta(t)}=\set{{\bfvarphi}(y) + \eta(t,y){\bfn}(y):y\in \omega}.
\end{align}
With some abuse of notation we define the deformed space-time cylinder $\mathcal I\times\Omega_\eta=\bigcup_{t\in I}\set{t}\times\Omega_{\eta(t)}\subset\R^{1+3}$.
The corresponding function spaces for variable domains are defined as follows.
\begin{definition}{(Function spaces)}
For $\mathcal I=(0,T)$, $T>0$, and $\eta\in C(\overline{\mathcal I}\times\omega)$ with $\|\eta\|_{L^\infty(\mathcal I\times\omega)}< L$ we define for $1\leq p,r\leq\infty$
\begin{align*}
L^p(\mathcal I;L^r(\Omega_\eta))&:=\big\{v\in L^1(\mathcal I\times\Omega_\eta):\,\,v(t,\cdot)\in L^r(\Omega_{\eta(t)})\,\,\text{for a.e. }t,\,\,\|v(t,\cdot)\|_{L^r(\Omega_{\eta(t)})}\in L^p(\mathcal I)\big\},\\
L^p(\mathcal I;W^{1,r}(\Omega_\eta))&:=\big\{v\in L^p(\mathcal I;L^r(\Omega_\eta)):\,\,\nabla v\in L^p(\mathcal I;L^r(\Omega_\eta))\big\}.
\end{align*}
\end{definition}

\section{The Stokes system in time-dependent domains}
 We consider the unsteady Stokes system
\begin{align}\label{eq:Stokesmoving}
\partial_t\bfu=\Delta \bfu-\nabla\pi+\bff,\quad\Div\bfu=0,\quad\bfu|_{\mathcal I\times\partial{\Omega_\eta}}=0,\quad \bfu(0,\cdot)=\bfu_0,
\end{align}
in a moving domain $\mathcal I\times\Omega_\eta$. Here $\eta:\mathcal I\times\omega\rightarrow\R$ is a given function and we refer to Section \ref{ssec:geom} for the definition of $\Omega_\eta$. The existence of a unique weak solution to \eqref{eq:Stokesmoving} 
 follows from \cite[Theorem 5.1]{NRL} in the special case $p=2$. We are interested in the conditions on $\eta$ which allow for a maximal regularity estimate in the $L^r_tL^p_x$-framework. As a first step we are going to re-paremetrise the boundary of $\Omega_\eta$ locally.  

\subsection{Local re-parametrisation of the boundary}
\label{sec:repara}
Given a function $\eta\in C^0(\overline{\mathcal{I}};C^1(\omega))$ we suppose that there
are numbers $L_0\in(0,L)$ with $L$ given in \eqref{eq:boundary1} and $\kappa_0>0$ such that
\begin{align}\label{eq:L0}
\|\eta\|_{L^\infty(\mathcal I\times \omega)}\leq L_0,\quad \inf_{\mathcal I\times\omega}\partial_1\bfvarphi_\eta\times\partial_2\bfvarphi_\eta\geq \kappa_0,
\end{align}
where $\bfvarphi_\eta$ is defined in accordance with \eqref{eq:bfvarphi}.
Given $y_\star\in \omega$ for some $t\in\overline{\mathcal I}$ fixed
(such that $x_\star:=\bfvarphi(y_\star)+\eta(t,y_\star)\bfn(y_\star)\in\partial\Omega_{\eta(t)}$) there is a neighbourhood
$\omega_\star$ of $y_\star$ such that
\begin{align}\label{eq:L0'}
\bigg(\dashint_{\omega_\star}\partial_1\bfvarphi_\eta\dy\bigg)\times\bigg(\dashint_{\omega_\star}\partial_2\bfvarphi_\eta\dy\bigg)\geq \frac{\kappa_0}{2},
\end{align}
The size of $\omega_\star$ only depends on $\kappa_0$ from \eqref{eq:L0} and $\|\nabla\eta(t)\|_{L^\infty_y}$ and is, in particular, independent of $y_0$. Now \eqref{eq:L0'} implies that we can rotate the coordinate system such that $\dashint_{\omega_\star}\partial_1\bfvarphi_{\eta(t)}\dy$ and  $\dashint_{\omega_\star}\partial_2\bfvarphi_{\eta(t)}\dy$
lie in the $x-y$ plane and $\dashint_{\omega_\star}\bfn_{\eta(t)}\dy$ is orthogonal to it. 
This can be done with the help of an affine linear mapping
\begin{align}\label{eq:V}
\mathscr V_{t,\star}^\eta z=\mathcal Q _{t,\star}^\eta z+x_\star.
\end{align}
Here $\mathcal Q _{t,\star}^\eta\in\R^{3\times 3}$ is an orthogonal matrix which satisfies $\mathcal Q _{t,\star}^\eta(0,0,1)^\top=\dashint_{\omega_\star}\bfn_{\eta(t)}\dy$.
It can be computed explicitly as a function of $\dashint_{\omega_\star}\bfn_{\eta}\dy$ and one easily sees that
\begin{align}\label{eq:dtQ}
|\partial_t \mathcal Q _{t,\star}^\eta|\lesssim\dashint_{\omega_\star}|\partial_t\nabla \eta(t)|\dy\lesssim\int_{\omega}|\partial_t\nabla \eta(t)|\dy,
\end{align} 
where the hidden constant depends on the $L^\infty_tW^{1,\infty}_y$-norm of $\eta$ and assumption \eqref{eq:L0}. Note that the last hidden constant also depends on the size of $\omega_\star$.
We describe now
the boundary locally around $x_\ast$ by the function
$\bfUpsilon_\eta:=(\mathscr V_{t,\star}^\eta)^{-1}\bfvarphi_\eta$.
 Accordingly, it holds
\begin{align}\label{eq:detUps}\mathrm{det}\big(\nabla_y\widetilde\bfUpsilon_{\eta(t)}\big)>0,\quad\widetilde\bfUpsilon_\eta=\begin{pmatrix}\Upsilon^1_\eta\\\Upsilon^2_\eta\end{pmatrix},
\end{align}
in $\omega_\star$. Hence there is
 a neighbourhood $\mathcal O_\star$ of the origin in $\R^2$ such that the function $\widetilde\bfUpsilon_{\eta(t)}:\omega_\star\rightarrow \mathcal O_\star$ is invertible
(using \eqref{eq:L0} one can easily show that $\widetilde\bfUpsilon_{\eta(t)}$ is, in fact, globally invertible in $\omega_\star$). In $\mathcal O_\star$ we define the function
\begin{align}\label{eq:phi}
\bfphi(t,z)
=\begin{pmatrix}z\\ \phi(t,z))\end{pmatrix}
=\begin{pmatrix}z\\ \Upsilon^3_{\eta(t)}((\widetilde\bfUpsilon_{\eta(t)})^{-1}(z))\end{pmatrix}.
\end{align}
It describes the boundary $(\mathscr V^\eta_{t,\star})^{-1}\partial{\Omega}_{\eta(t)}$ close to $0=(\mathscr V^\eta_{t,\star})^{-1}x_\star$ given by $(\mathscr V^\eta_{t,\star})^{-1}\bfvarphi_{\eta(t)}(\omega_\star)$.
One easily checks from the choice of $\mathcal Q_{t,\ast}^\eta$ that $\dashint_{\omega_\star}\nabla_z\Upsilon_\eta^3\,\mathrm d y=0$.
Consequently, $\nabla_z\phi$ is small in $\mathcal O_\star$ provided $\eta(t)\in C^1(\omega)$ and $\mathcal O_\star$ is sufficiently small. 

Suppose now that we have $\eta\in C(\overline{\mathcal I};B^{\theta}_{\varrho,p}\cap C^1(\omega))$, $\theta>2-1/p$, where $p$ and $\varrho$ are related through \begin{align*}
\varrho\geq p\quad\text{if}\quad p\geq 3,\quad \varrho\geq \tfrac{2p}{p-1}\quad\text{if}\quad p< 3,
\end{align*} and that $\sup_I \mathrm{Lip}(\partial\Omega_{\eta(t)})$ is sufficiently small. Then we conclude from \eqref{eq:MSa}, \eqref{eq:MSb} and \eqref{lem:9.4.1} uniformly in time
\begin{align}\label{eq:Br0}
\|\phi\|_{\mathcal M^{2-1/p,p}}\leq\delta,\quad \|\phi\|_{W^{1,\infty}_y}\leq \delta,
\end{align}
for some sufficiently small $\delta$ in $\mathcal O_\star$. Finally, due to the formula
\begin{align*}
\partial_t\phi&=\partial_t\Upsilon^3_{\eta(t)}\circ(\widetilde\bfUpsilon_{\eta(t)})^{-1}+\nabla_y\Upsilon^3_{\eta(t)}\circ(\widetilde\bfUpsilon_{\eta(t)})^{-1}\partial_t(\widetilde\bfUpsilon_{\eta(t)})^{-1}\\
&=\partial_t\Upsilon^3_{\eta(t)}\circ(\widetilde\bfUpsilon_{\eta(t)})^{-1}-\nabla_y\Upsilon^3_{\eta(t)}\circ(\widetilde\bfUpsilon_{\eta(t)})^{-1}\big(\nabla_y\widetilde\bfUpsilon_{\eta(t)}\circ(\widetilde\bfUpsilon_{\eta(t)})^{-1}\big)^{-1}\partial_t\widetilde\bfUpsilon_{\eta(t)}\circ (\widetilde\bfUpsilon_{\eta(t)})^{-1},
\end{align*}
\eqref{eq:bfvarphi}, \eqref{eq:dtQ} and \eqref{eq:detUps} one checks that
\begin{align}\label{eq:dtphi}
|\partial_t\phi\circ\widetilde\bfUpsilon_{\eta(t)}|\lesssim 1+|\partial_t\eta|+\int_\omega|\partial_t\nabla\eta|\dy,
\end{align}
where the hidden constant depends on $L^\infty_tW^{1,\infty}_y$-norm of $\eta$ and the size of $\omega_\star$. Similarly, using the formula
\begin{align*}
\partial_t\nabla_y\phi&=\partial_t\nabla_y\Upsilon^3_{\eta(t)}\circ(\widetilde\bfUpsilon_{\eta(t)})^{-1}\nabla(\widetilde\bfUpsilon_{\eta(t)})^{-1}+\nabla_y^2\Upsilon^3_{\eta(t)}\circ(\widetilde\bfUpsilon_{\eta(t)})^{-1}\nabla_y(\widetilde\bfUpsilon_{\eta(t)})^{-1}\partial_t(\widetilde\bfUpsilon_{\eta(t)})^{-1}\\&+\nabla_y\Upsilon^3_{\eta(t)}\circ(\widetilde\bfUpsilon_{\eta(t)})^{-1}\partial_t\nabla_y(\widetilde\bfUpsilon_{\eta(t)})^{-1},
\end{align*}
we obtain
\begin{align}\label{eq:dtDphi}
|\partial_t\nabla_y\phi\circ\widetilde\bfUpsilon_{\eta(t)}|\lesssim |\partial_t\nabla\eta|+(|\nabla^2\eta|+1)\bigg(1+|\partial_t\eta|+\int_\omega|\partial_t\nabla\eta|\dy\bigg).
\end{align}

 By choosing suitable points $y_1,\dots,y_\ell$ for some $\ell\in\N$ and coordinates as above
we can cover $\partial\Omega_{\eta(t)}$ by open sets\footnote{In fact, $\mathcal U_j$ covers $\bfvarphi_{\eta(t)}(\omega_j)$ for some $\omega_j\subset\omega$, where $\omega_j$ has the role of $\omega_\star$ as in \eqref{eq:L0'}.} $\mathcal U^1,\dots,\mathcal U^\ell$ 
such that
the following holds. 
Setting
$\mathscr V_j:=\mathscr V^\eta_{t,y_j}$
and defining $\phi_j:\mathbb R^{2}\rightarrow\mathbb R$ in accordance with \eqref{eq:phi} there is
$r_j>0$
with the following properties:
\begin{enumerate}[label={\bf (A\arabic{*})}]
\item\label{A1} There is $h_j>0$ such that
$$\mathcal U^j=\{x=\mathscr V_jz\in\mathbb R^3:\,z=(z',z_3)\in\R^3,\,|z'|<r_j,\,
|z_3-\phi_j(z')|<h_j\}.$$
\item\label{A2} For $x\in\mathcal U^j$ we have with $z=\mathscr V_j^{-1}(x)$
\begin{itemize}
\item $x\in\partial\Omega_{\eta(t)}$ if and only if $z_3=\phi_j(z')$;
\item $x\in\Omega_{\eta(t)}$ if and only if $0<z_3-\phi_j(z')<h_j$;
\item $x\notin\Omega_{\eta(t)}$ if and only if $0>z_3-\phi_j(z')>-h_j$.
\end{itemize}
\item\label{A3} We have that
$$\partial\Omega_{\eta(t)}\subset \bigcup_{j=1}^\ell\mathcal U^j.$$
\end{enumerate}
If $\eta\in C(\overline{\mathcal I};C^{1}(\omega))$ and
\ref{A1}--\ref{A3} hold for some $t\in\overline{\mathcal I}$
we can slightly vary the value of $t$ such that \ref{A1}--\ref{A3} continue to hold with the same choices of $y_j$, $r_j$ and $h_j$, $j=1,\dots,\ell$. Similarly, it is not necessary to change $\mathcal U^j$.

We need to extend the functions $\varphi_1,\dots,\varphi_\ell$  from \ref{A1}--\ref{A3} to the half space
$\mathbb H := \set{\xi = (\xi',\xi_3)\,:\, \xi_3 > 0}$. Hence we are confronted with the task of extending a function~$\phi\,:\, \R^{2}\to \R$ to a mapping $\bfPhi\,:\, \mathbb H \to \R^3$.
This can be done using the extension operator of Maz'ya and Shaposhnikova~\cite[Section 9.4.3]{MaSh}. Let $\zeta \in C^\infty_c(B_1(0'))$ with $\zeta \geq 0$ and $\int_{\R^{2}} \zeta(x')\dx'=1$. Let $\zeta_t(x') := t^{-2} \zeta(x'/t)$ denote the induced family of mollifiers. We define the extension operator 
\begin{align*}
  (\mathcal{T}\phi)(\xi',\xi_3):=\int_{\R^{2}} \zeta_{\xi_3}(\xi'-y')\phi(y')\dy',\quad (\xi',\xi_3) \in \mathbb H,
\end{align*}
where~$\phi:\R^2\to \R$ is a Lipschitz function with Lipschitz constant~$L$.
Then the estimate 
\begin{align}\label{est:ext}
  \norm{\nabla (\mathcal{T} \phi)}_{W^{s,p}(\setR^{3})}\le c\norm{\nabla \phi}_{W^{s-\frac 1 p,p}(\setR^{2})}
\end{align}
follows from~\cite[Theorem 8.7.2]{MaSh}. Moreover, \cite[Theorem 8.7.1]{MaSh} yields
\begin{align}\label{eq:MS}
\|\mathcal T\phi\|_{\mathcal M^{s,p}(\mathbb H)}\lesssim \|\phi\|_{\mathcal M^{s-\frac{1}{p},p}(\R^{2})}.
\end{align}
It is shown in \cite[Lemma 9.4.5]{MaSh} that (for sufficiently large~$N$, i.e., $N \geq c(\zeta) L+1$) the mapping
\begin{align*}
  \alpha_{z'}(z_3) \mapsto N\,z_3+(\mathcal{T} \phi)(z',z_3)
\end{align*}
is for every $z' \in \setR^{2}$ one to one and the inverse is Lipschitz with gradient
bounded by $(N-L)^{-1}$.
Now, we define the mapping~$\bfPhi\,:\, \mathbb H \to \R^3$ as a rescaled version of the latter one by setting
\begin{align}\label{eq:Phi}
  \bfPhi(\xi',\xi_3)
  &:=
    \big(\xi',
    \alpha_{\xi_3}(\xi')\big) = 
    \big(\xi',
    \,\xi_3 + (\mathcal{T} \phi)(\xi',\xi_3/N)\big).
\end{align}
Thus, $\bfPhi$ is one-to-one (for sufficiently large~$N=N(L)$) and we can define its inverse $\bfPsi := \bfPhi^{-1}$.
 The Jacobi matrix of the mapping $\bfPhi$ satisfies
\begin{align}\label{J}
  J = \nabla \bfPhi = 
  \begin{pmatrix}
    \mathbb I_{2\times 2}&0
    \\
    \partial_{\xi'} (\mathcal{T}  \phi)& 1+ 1/N\partial_{\xi_3}(\mathcal{T}  \phi)
  \end{pmatrix}.
\end{align}
Since 
$\abs{\partial_{\xi_3}\mathcal{T}  \phi} \leq L$, we have \begin{align}\label{eq:detJ}\frac{1}{2} < 1-L/N \leq \abs{\det(J)} \leq 1+L/N\leq 2\end{align}
using that $N$ is large compared to~$L$. Finally, we note the implication
\begin{align} \label{eq:SMPhiPsi}
\bfPhi\in\mathcal M^{s,p}(\mathbb H)\,\,\Rightarrow \,\,\bfPsi\in \mathcal M^{s,p}(\mathbb H),
\end{align}
which holds, for instance, if $\bfPhi$ is Lipschitz continuous, cf. \cite[Lemma 9.4.2]{MaSh}.

\subsection{Maximal regularity theory}
\label{sec:stokesunsteadymoving}
With the preparations from the previous subsection at hand we are now able to prove the following maximal regularity theorem for the Stokes system in moving domains.
\begin{theorem}\label{thm:stokesunsteadymoving}
Let $p,r\in(1,\infty)$ and 
\begin{align}\label{eq:SMp''}
\varrho\geq p\quad\text{if}\quad p\geq 3,\quad \varrho\geq \tfrac{2p}{p-1}\quad\text{if}\quad p< 3.
\end{align}
 Suppose that $\eta\in C(\overline{\mathcal I};B^{\theta}_{\varrho,p}\cap W^{2,2}\cap C^{1}(\omega))$ for some $\theta>2-1/p$, that $\sup_{\mathcal I} \mathrm{Lip}(\partial\Omega_{\eta(t)})$ is sufficiently small and that \eqref{eq:L0} holds. Suppose further that $\partial_t\eta\in L^{r_0}(\mathcal I; W^{1,q_0}(\omega))$ for some $r_0>\max\{2,r,\frac{2p'}{3}\}$ and $q_0>2$, $\bff\in L^r(\mathcal I;L^{p}(\Omega_\eta))$ and $\bfu_{0}\in W^{2,p}\cap W^{1,p}_{0,\Div}(\Omega_{\eta_0})$. 
Then there is a unique solution to \eqref{eq:Stokesmoving} and we have
\begin{align}\label{eq:mainparacor}
\|\partial_t&\bfu\|_{L^r(\mathcal I;L^{p}(\Omega_\eta))}+\|\bfu\|_{L^r(\mathcal I;W^{2,p}(\Omega_\eta))}
+\|\pi\|_{L^r(\mathcal I;W^{1,p}(\Omega_\eta))}\lesssim\|\bff\|_{L^r(\mathcal I;L^{p}(\Omega_\eta))}+\|\bfu_{0}\|_{W^{2,p}(\Omega_{\eta_0})}.
\end{align}
\end{theorem}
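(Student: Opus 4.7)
The plan is to reduce \eqref{eq:Stokesmoving} to a perturbed Stokes problem on the half-space $\mathbb H$ via the local parametrisation of Section~\ref{sec:repara}, then apply the classical $L^r_tL^p_x$ maximal regularity (Solonnikov--Giga--Sohr) in $\mathbb H$ and absorb the perturbations. Concretely, I introduce an interior patch $\mathcal U^0\Subset\Omega_{\eta(t)}$, a partition of unity $\{\chi_j\}_{j=0}^\ell$ subordinate to $\{\mathcal U^j\}_{j=0}^\ell$, and split $\bfu=\sum_j\chi_j\bfu$. The interior piece is handled by the standard interior Stokes estimate applied after a Bogovskii correction of the cutoff commutator. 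For $j\ge 1$ I push the equation forward by the affine map $\mathscr V_j$ composed with the Maz'ya--Shaposhnikova straightening $\bfPhi_j$ from \eqref{eq:Phi}, using a Piola-type transformation for the velocity so that the divergence constraint is preserved up to lower-order terms. This yields in $\mathbb H$, with zero Dirichlet data, a system
\[
\partial_t\bfv-\Delta\bfv+\nabla q=\bff^\sharp+\mathcal R_s(\bfv,q)+\mathcal R_t(\bfv),\qquad \Div\bfv=\mathcal D(\bfv),
\]
where $\mathcal R_s,\mathcal R_t$ collect the spatial and temporal coefficient variations produced by $\bfPhi_j$, and $\mathcal D$ contains the cutoff and transformation corrections to the divergence.

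Applying the half-space maximal regularity bounds the quantity on the left of \eqref{eq:mainparacor} by the $L^r_tL^p_\xi$ norms of the right-hand terms above, together with the $L^r_tW^{1,p}_\xi\cap W^{1,r}_tW^{-1,p}_\xi$ norm of $\mathcal D(\bfv)$ (the latter resolved through a Bogovskii lift). The spatial perturbation $\mathcal R_s$ consists schematically of $(J_j^{-\top}J_j^{-1}-\mathbb I)\nabla^2\bfv$, $(\nabla J_j^{-1})\nabla\bfv$, and analogous contributions to the pressure gradient, while the spatial part of $\mathcal D$ is $(\mathbb I-\det J_j)\nabla\bfv$. By \eqref{eq:MS} combined with the smallness \eqref{eq:Br0}, each of these is controlled by $C\delta$ times a corresponding norm on the left and can therefore be absorbed, provided $\sup_{\mathcal I}\mathrm{Lip}(\partial\Omega_{\eta(t)})$ is small enough.

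The main obstacle, absent from the fixed-domain analysis of \cite{Br2}, is the temporal perturbation $\mathcal R_t(\bfv)=(\partial_t\bfPhi_j)\cdot\nabla\bfv$ coming from the transformation of the material derivative. Using \eqref{eq:dtphi}, the two-dimensional embedding $W^{1,q_0}(\omega)\hookrightarrow L^\infty(\omega)$ for $q_0>2$, and the hypothesis $\partial_t\eta\in L^{r_0}(\mathcal I;W^{1,q_0}(\omega))$, one obtains $\partial_t\bfPhi_j\in L^{r_0}_tL^\infty_\xi$ (the extension $\mathcal T$ preserves $L^\infty$). By H\"older and interpolation between $\nabla\bfv\in L^r_tW^{1,p}_\xi$ (in the maximal regularity space) and the trace embedding $W^{1,r}_tL^p_\xi\cap L^r_tW^{2,p}_\xi\hookrightarrow C([0,T];B^{2-2/r}_{p,r})$, one arrives at
\[
\|(\partial_t\bfPhi_j)\nabla\bfv\|_{L^r_tL^p_\xi}\lesssim\|\partial_t\bfPhi_j\|_{L^{r_0}_tL^\infty_\xi}\,\|\nabla\bfv\|_{L^{s}_tL^p_\xi},\qquad \tfrac{1}{r}=\tfrac{1}{r_0}+\tfrac{1}{s},
\]
with an interpolation prefactor that tends to zero with the length of the subinterval. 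The thresholds $r_0>\max\{2,r,\tfrac{2p'}{3}\}$ are precisely what makes this interpolation close; the bound $r_0>\tfrac{2p'}{3}$ enters through the simultaneous control of $\nabla q$ via the divergence equation.

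With the a priori estimate established on any sufficiently short subinterval, existence and uniqueness follow by a Banach contraction argument in the maximal regularity space; the full estimate on $\mathcal I$ is obtained by concatenating finitely many such steps, since all constants depend on $\eta$ only through the norms in the hypotheses. Uniqueness on $\mathcal I$ is then inherited from the linear a priori estimate applied to the difference of two solutions.
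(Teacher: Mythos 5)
Your route is structurally different from the paper's: you localize the unknown solution and try to close an a priori estimate by absorption, whereas the paper builds a parametrix $\mathscr R$ from whole-space/half-space model problems driven by the datum $\bff$ (plus a Bogovskii correction) and inverts $\mathrm{id}+\mathscr T$ by a Neumann series. This difference is exactly where your argument has a genuine gap. Multiplying by the cutoffs $\chi_j$ produces, besides the divergence correction you do mention, momentum commutator terms of the form $\nabla\chi_j\cdot\nabla\bfu$, $\Delta\chi_j\,\bfu$ and, crucially, $\pi\nabla\chi_j$; your list of right-hand side terms ($\bff^\sharp$, $\mathcal R_s$, $\mathcal R_t$) omits them. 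They contain no coefficient of the transformation, so they are not $O(\delta)$, and they do not acquire smallness on short intervals either: the pressure has no time regularity, with zero mean its $L^r_tL^p_x$ norm is comparable (Poincar\'e/Ne\v{c}as) to the quantity $\|\nabla\pi\|_{L^r_tL^p_x}$ you are trying to bound, and $W^{-1,p}\supset L^p$ gives no gain, in contrast to the velocity, for which zero initial data and interpolation in $W^{1,r}_tL^p\cap L^r_tW^{2,p}$ do give a factor vanishing with $T$. So the absorption as stated does not close; rescuing it would require an extra argument for the pressure (e.g. estimating $\pi$ via the equation, the Helmholtz projection and Ne\v{c}as' negative norm theorem on $\Omega_\eta$), which you do not supply. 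The paper's parametrix is designed precisely to avoid this: its lower-order terms are evaluated on the model solutions $\bfU_j,\mathfrak q_j$ with datum $\bff$, whose lower-order norms carry the factor $T^{r/2}$ by scaling, cf. \eqref{est:stokeswhole}$_2$ and \eqref{est:stokeshalf}$_2$.

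A second gap: you never use the hypothesis $\eta\in C(\overline{\mathcal I};W^{2,2}(\omega))$, and correspondingly you gloss over the time derivative of the divergence correction. In the paper this is the term $\partial_t\bfB_j:\nabla\bfV_j$ (the $(R)_4$ estimate), where $\partial_t\nabla\bfPhi_j$ must be controlled in $L^3_x$ via \eqref{eq:dtDphi}, which is exactly where $\eta\in L^\infty_tW^{2,2}_y$ is needed, and it is there — not in a generic ``control of $\nabla q$'' — that the threshold $r_0>\tfrac{2p'}{3}$ genuinely enters (through the embedding into $L^{\overline r}_tW^{1,3/2}_x$ when $p<\tfrac32$). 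Your statement that the $W^{1,r}_tW^{-1,p}$ norm of $\mathcal D(\bfv)$ is ``resolved through a Bogovskii lift'' hides precisely this estimate (and the mapping property \eqref{Bog} of $\partial_t\Bog_{\Omega_\eta}\Div$ on the moving domain). By contrast, your treatment of the temporal transport term $\mathcal R_t$ via \eqref{eq:dtphi}, the embedding $W^{1,q_0}(\omega)\hookrightarrow L^\infty(\omega)$ and time interpolation is essentially the paper's argument for its term $\mathscr S_9$ and is fine.
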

\begin{remark}
The only comparable result concerning the maximal regularity result for the Stokes problem in non-cylindrical domains is given in \cite[Section 2.10]{HS}. The authors consider there smooth domains (of class $C^3$) and suppose that the transformation of the domain induced by $\eta$ is volume preserving. This is restrictive for applications in fluid-structure interaction.
\end{remark}
\begin{remark}\label{rem:lipsmall}
It holds $\sup_I \mathrm{Lip}(\partial\Omega_{\eta(t)})\ll 1$ for instance if $t\mapsto \eta(t)$ maps boundedly into a function space
strictly smaller than $W^{1,\infty}(\omega)$ (such as $C^{1,\alpha}(\omega)$ for some $\alpha>0$) or even if $\eta\in C^0(\overline{\mathcal{I}},C^1(\omega))$. In these cases one can re-parametrise the boundary by local charts (as done in Section \ref{sec:repara} above)
obtaining a small local Lipschitz constant $\mathrm{Lip}(\partial\Omega_{\eta(t)})$.
\end{remark}
\begin{proof}[Proof of Theorem \ref{thm:stokesunsteadymoving}.]
Suppose that $\bfu_0=0$ (otherwise, one can consider $\bfv:=\bfu-\bfu_0$).
For $t\in\overline{\mathcal I}=[0,T]$ with $T\ll1$ we consider functions $\phi_1,\dots,\phi_\ell$ satisfying \ref{A1}--\ref{A3} as introduced in Section \ref{sec:repara}, cf. equation \eqref{eq:phi}.
We clearly find an open set $\mathcal U^0$ such that ${\Omega_{\eta(t)}}\subset \cup_{j=0}^\ell \mathcal U^j$ for all $t\in\overline{\mathcal I}$.
 Finally, we consider a decomposition of unity $(\xi_j)_{j=0}^\ell$ with respect to the covering
$\mathcal U^0,\dots,\mathcal U^\ell$. 
For $j\in\{1,\dots,\ell\}$ we consider the extension $\bfPhi_j$ of $\phi_j$ given by \eqref{est:ext} with inverse $\bfPsi_j$. Note that $\phi_j,\bfPhi_j$ and $\bfPsi_j$ also depend on $t$, whereas $\xi_j$ does not.
We define the operators\footnote{Since $\partial\Omega_\eta$ is Lipschitz uniformly in time we can use a standard extension operator to extend functions in \eqref{eq:operators}--\eqref{eq:stokeshalf} to the whole space or half space when necessary.}
\begin{align}\label{eq:operators}
\mathscr R_0\bff&:=\xi_0 \bfU_0+\sum_{j=1}^\ell\xi_j \bfU_j \circ\bfPsi_j\circ\mathscr V_j ,\quad
\mathscr P\bff:=\sum_{j=1}^\ell\xi_j\mathfrak q_j \circ\bfPsi_j\circ\mathscr V_j,
\end{align}
\begin{align*}
\mathscr R\bff=\mathscr R_0\bff+\mathscr R_1\bff,\quad \mathscr R_1\bff=-\Bog_{{\Omega_{\eta(t)}}}\Div\mathscr R_0\bff,
\end{align*}
with the Bogovskii-operator $\Bog_{\Omega_{\eta(t)}}$ on the moving domain. The latter has been extensively analysed in \cite{SaSc} and it is shown that it has the expected properties due to our assumption $\eta\in C^{0}(\overline{\mathcal I};C^1(\omega))$. In particular, it holds
\begin{align}\label{Bog}
\partial_t\mathrm{Bog}_{\Omega_\eta}\Div:W^{1,p}(\mathcal I;L^{p}(\Omega_\eta))
\rightarrow L^p(\mathcal I;L^{p}(\Omega_\eta)).
\end{align}
The functions $(\bfU_0,\mathfrak q_0)$ and $(\bfU_j,\mathfrak q_j)$ for $j\in\{1,\dots,\ell\}$ are the solutions to the Stokes problem on the whole space and the half space respectively with data $\bff$ (transformed if necessary), that is, we have
\begin{align}\label{eq:stokeswhole}
\partial_t\bfU_0=\Delta \bfU_0-\nabla\mathfrak q_0+\bff,\quad\Div\bfU_0=0,\quad \bfU_0(0,\cdot)=0,
\end{align}
and 
\begin{align}\label{eq:stokeshalf}
\partial_t\bfU_j=\Delta \bfU_j-\nabla\mathfrak q_j+\bff\circ\mathscr V_j^{-1}\circ \bfPhi_j,\quad\Div\bfU_j=0,\quad\bfU_j|_{I\times\partial\mathbb H}=0,\quad \bfU_j(0,\cdot)=0.
\end{align}
We have 
\begin{align}\label{est:stokeswhole}
\begin{aligned}
\int_{\mathcal I}\Big(\|\partial_t\bfU_0\|_{L^{p}_x}^r+\|\nabla^2\bfU_0\|^r_{L^{p}_x}+\|\nabla\mathfrak q_0\|_{L^{p}_x}^r\Big)\dt&\lesssim \int_{\mathcal I}\|\bff\|_{L^{p}_x}^r\dt,\\
\int_{\mathcal I}\Big(\|\partial_t\bfU_0\|_{W^{-1,p}_x}^r+\|\bfU_0\|_{W^{1,p}_x}^r+\|\mathfrak q_0\|_{L^{p}_x}^r\Big)\dt&\lesssim T^{r/2}\int_{\mathcal I}\|\bff\|_{L^{p}_x}^r\dt,
\end{aligned}
\end{align}
and for $j=1,\dots,\ell$ (using Lipschitz continuity of $\bfPhi_j$ which follows from that of $\phi_j$, cf. \eqref{J}, and that $\mathcal Q_j$ is an orthogonal matrix)
\begin{align}\label{est:stokeshalf}
\begin{aligned}
\int_{\mathcal I}\Big(\|\partial_t\bfU_j\|_{L^{p}_x}^r+\|\nabla^2\bfU_j\|_{L^{p}_x}^r+\|\nabla\mathfrak q_j\|_{L^{r}_x}^r\Big)\dt&\lesssim\int_{\mathcal I}\|\bff\circ\mathscr V_j\circ \bfPhi_j\|_{L^{p}_x}^r\dt\lesssim \int_{\mathcal I}\|\bff\|_{L^{p}_x}^r\dt,\\
\int_{\mathcal I}\Big(\|\partial_t\bfU_j\|_{W^{-1,p}_x}^r+\|\bfU_j\|_{W^{1,p}_x}^r+\|\mathfrak q_j\|_{L^{p}_x}^r\Big)\dt&\lesssim T^{r/2}\int_{\mathcal I}\|\bff\circ\mathscr V_j\circ \bfPhi_j\|_{L^{p}_x}^r\dt\lesssim T^{r/2}\int_{\mathcal I}\|\bff\|_{L^{p}_x}^r\dt,
\end{aligned}
\end{align}
uniformly in $T$. Note that estimates \eqref{est:stokeswhole}$_2$ and \eqref{est:stokeshalf}$_2$ only hold locally in space (that is, in balls $B\subset\R^n$ with a constant depending on the radius).  
Estimates \eqref{est:stokeswhole} and \eqref{est:stokeshalf} are classical in the case $r=p$, see \cite[Theorems 3.1 \& 3.2]{So}.
For the case of arbitrary exponents $p$ and $r$ we refer to \cite{HS} and the references therein.
The $T$-dependence in \eqref{est:stokeswhole}$_2$ and \eqref{est:stokeshalf}$_2$ follows by scaling.

Setting $\bfV_j=\bfU_j\circ\bfPsi_j\circ\mathscr V_j$ and $\mathfrak Q_j=\mathfrak q_j\circ\bfPsi_j\circ\mathscr V_j $, we obtain
\begin{align}\label{eq:Stokesback}
\begin{aligned}
\partial_t\bfV_j=&\Delta\bfV_j-\nabla\mathfrak Q_j+(1-\mathrm{det}(\nabla\bfPsi_j))\partial_t\bfV_j-\Div\big((\mathbb I_{3\times 3}-\bfA_j)\nabla\bfV_j)-\Div((\mathbf{B}_j-\mathbb I_{3\times 3})\mathfrak Q_j)\\&-\mathrm{det}(\nabla\bfPsi_j)\nabla\bfV_j(\partial_t\mathscr V_j^{-1}\circ\mathscr V_j+\mathcal Q_j^\top\partial_t\bfPhi_j\circ \bfPsi_j\circ\mathscr V_j)+\bff,\\
&\Div\bfV_j=(\mathbb I_{3\times 3}-\mathbf{B}_j)^\top:\nabla\bfV_j,\quad\bfV_j|_{\mathcal I\times\partial{\Omega_\eta}\cap \mathcal U^j}=0,\quad \bfV_j(0,\cdot)=0,
\end{aligned}
\end{align}
where $\bfA_j:=\mathrm{det}(\nabla\bfPsi_j)\nabla\bfPhi_j^\top\circ\bfPsi_j\nabla\bfPhi_j\circ\bfPsi_j$ and $\mathbf{B}_j:=\mathrm{det}(\nabla\bfPsi_j)\mathcal Q_j^\top\nabla\bfPhi_j\circ\bfPsi_j$.
 There holds
\begin{align}
\partial_t\mathscr R\bff&-\Delta\mathscr R\bff+\nabla \mathscr P\bff=\bff+\mathscr S\bff+(\partial_t-\Delta)\mathscr R_1\bff,\label{eq:4.10}\\
\mathscr S\bff&=-\nabla\bfV_0\nabla\xi_0-\Div\big(\nabla\xi_0\otimes\bfV_0\big)
-\sum_{j=1}^\ell\nabla\bfV_j \nabla\xi_j\nonumber\\&-\sum_{j=1}^\ell\Div\big(\nabla\xi_j\otimes\bfV_j\big)+\sum_{j=1}^\ell\nabla\xi_j \mathfrak Q_j
-\sum_{j=1}^\ell\xi_j\Div((\mathbf{B}_j-\mathbb I_{3\times 3})\mathfrak Q_j)\nonumber\\
&-\sum_{j=1}^\ell\xi_j\Div\big((\mathbb I_{3\times 3}-\bfA_j)\nabla\bfV_j)+\sum_{j=1}^\ell\xi_j(1-\mathrm{det}(\nabla\bfPsi_j))\partial_t\bfV_j\nonumber\\
&-\sum_{j=1}^\ell\xi_j\mathrm{det}(\nabla\bfPsi_j)\nabla\bfV_j(\partial_t\mathscr V_j^{-1}\circ\mathscr V_j+\mathcal Q_j^\top\partial_t\bfPhi_j\circ \bfPsi_j\circ\mathscr V_j)=:\sum_{i=1}^9 \mathscr S_i\bff.\label{eq:4.11}
\end{align}
We want to invert the operator
\begin{align*}
\mathscr L&:\mathscr Y_{r,p}^\eta\rightarrow L^r(\mathcal I; L^p_{\Div}(\Omega_\eta)),\quad
\bfv\mapsto\mathcal P_p^\eta\big(\partial_t\bfv-\Delta\bfv\big),
\end{align*}
where the space $\mathscr Y_{r,p}^\eta$ is given by
\begin{align*}
\mathscr Y_{r,p}^\eta:= L^r(\mathcal I;W^{1,p}_{0,\Div}\cap W^{2,p}({\Omega_\eta}))\cap W^{1,r}(\mathcal I;L^{p}({\Omega_\eta}))\cap\set{\bfv:\,\bfv(0,\cdot)=0}.
\end{align*}
From (\ref{eq:4.10}) it follows
\begin{align*}
\mathscr L\mathscr R\bff=\bff+\mathcal P^\eta_p\mathscr S\bff+\mathcal P^\eta_p(\partial_t-\Delta)\mathscr R_1\bff,
\end{align*}
i.e.,
\begin{align}\label{eq:IS}
\mathscr L\circ\mathscr R=\mathrm{id}+\mathscr T,
\end{align}
 with $\mathscr T=\mathcal P_p^\eta\mathscr S+\mathcal P^\eta_p(\partial_t-\Delta)\mathscr R_1$. 
Here $\mathcal P_p^\eta$ is the Helmholtz projection from $L^p({\Omega_\eta})$ onto $L^p_{\Div}({\Omega_\eta})$. 
The Helmholtz-projection $\mathcal P_p \bfu$ of a function $\bfu\in L^p({\mathcal O})$, $\mathcal O\subset\R^3$ bounded with normal $\bfn_{\mathcal O}$, is defined as $\mathcal P_p \bfu:=\bfu-\nabla h$, where $h$ is the solution to the Neumann-problem
\begin{align*}
\begin{cases}\Delta h=\Div \bfu\quad\text{in}\quad {\mathcal O},\\
\bfn_{\mathcal O}\cdot(\nabla h-\bfu)=0\quad\text{on}\quad\partial {\mathcal O}.
\end{cases}
\end{align*}
The aim is now to prove that the operator-norm of $\mathscr T$ from \eqref{eq:IS} is strictly smaller than 1, which implies that $\mathscr L$ is surjective. 

Due to the assumption $\eta\in C^0(\overline{\mathcal I};B^{\theta}_{\varrho,p}(\omega))$ with $\theta>2-1/p$ and \eqref{eq:Br0} the terms $\mathscr S_1\bff,\dots,\mathscr S_8\bff$ can be estimated exactly as in \cite[Proof of Theorem 3.1]{Br2} obtaining
\begin{align}\label{eq:july24a}
\int_{\mathcal I}\|\mathscr S_i\bff\|^r_{L^{p}_x}\dt\leq\,\delta(T)\int_{\mathcal I}\|\bff\|_{L^{p}_x}^r\dt,\quad i=1,\dots,8,
\end{align}
where $\delta(T)\rightarrow0$ as $T\rightarrow0$.
The term $\mathscr S_9\bff$ appears on account of the time-dependence of the boundary and does not have a counterpart in \cite{Br2}. 
In order to esitmate it we note that by \eqref{eq:Br0} and \eqref{eq:MS} we have uniformly in time
 $\bfPhi_j\in \mathcal M^{2,p}(\mathbb H)$ and thus $\bfPsi_j\in \mathcal M^{2,p}(\mathbb H)$ by \eqref{eq:SMPhiPsi}. 
This justifies the estimate 
\begin{align*}
\int_{\mathcal I}\|\bfV_j\|_{W^{2,p}_x}^r\dt\lesssim \int_{\mathcal I}\|\bfU_j\|_{W^{2,p}_x}^r\dt
\end{align*}
due to \eqref{lem:9.4.1}.
Using the embeddings
\begin{align*}
W^{1,r}(\mathcal I,L^p(\Omega))\cap L^r(\mathcal I,W^{2,p})\hookrightarrow W^{\frac{1}{2},r}(\mathcal I,W^{1,p}(\Omega))\hookrightarrow L^{\overline r}(\mathcal I,W^{1,p}(\Omega)),\quad W^{1,q_0}(\omega)\hookrightarrow L^\infty(\omega),
\end{align*}
where $\overline r=\infty$ if $r>2$, $\overline r<\infty$ arbitrary if $r=2$ and $\overline r=\frac{2r}{2-r}$ if $r<2$, as well as \eqref{eq:dtQ} and \eqref{eq:dtphi} (together with \eqref{est:ext} and the definition of $\bfPhi_j$) yields
\begin{align}\label{eq:0301}
\begin{aligned}
\int_{\mathcal I}\|\mathscr S_9\bff\|^r_{L^{p}_x}\dt
&\lesssim \sum_{j=1}^\ell\|\nabla\bfU_j\|_{L^{\overline r}(\mathcal I,L^{p}(\Omega))}^r\big(\|\partial_t\nabla\eta\|_{L^{\tilde r}(\mathcal I,L^{1}(\omega))}^r+\|\partial_t\eta\|_{L^{\tilde r}(\mathcal I,L^{\infty}(\omega))}^r\big)\\
&\lesssim \delta(T)\sum_{j=1}^\ell\|\nabla\bfU_j\|_{L^{\overline r}(\mathcal I,L^p(\Omega))}^r\|\partial_t\eta\|_{L^{r_0}(\mathcal I,W^{1,q_0}(\omega))}^r\\
&\lesssim \delta(T)\sum_{j=1}^\ell\big(\|\partial_t\bfU_j\|_{L^{r}(\mathcal I,L^p(\mathbb H))}^r+\|\bfU_j\|^r_{L^{r}(\mathcal I,W^{2,p}(\mathbb H))}\big)\\
&\lesssim \delta(T)\int_{\mathcal I}\|\bff\|_{L^{p}_x}^r\dt,
\end{aligned}
\end{align}
where $\max \{2,r\}<\tilde r<r_0$. Note that we also used \eqref{est:stokeshalf} in the last step and the assumption $\eta\in L^{r_0}(\mathcal I,W^{1,q_0}(\omega))$ in the second last one.
Combining this with \eqref{eq:july24a} and choosing $T$ small enough we can infer that
\begin{align}\label{eq:july24}
\int_{\mathcal I}\|\mathscr S\bff\|^r_{L^{p}_x}\dt\leq\,\tfrac{1}{4}\int_{\mathcal I}\|\bff\|_{L^{p}_x}^r\dt.
\end{align}
Now we are going to show the same for $(\partial_t-\Delta)\mathscr R_1$. 
 We have since $\Div\bfB_j^\top=0$
\begin{align*}
\Div\mathscr R_0\bff&=\nabla\xi_0\cdot\bfU_0+\sum_{j=1}^\ell\nabla\xi_j\cdot\bfV_j+\sum_{j=1}^\ell\xi_j(\mathbb I_{3\times 3}-\bfB_j)^\top:\nabla\bfV_j\\
&=\nabla\xi_0\cdot\bfU_0+\sum_{j=1}^\ell\nabla\xi_j\cdot\bfV_j+\sum_{j=1}^\ell\xi_j\Div\big((\mathbb I_{3\times 3}-\bfB_j)^\top:\bfV_j\big)
\end{align*}
such that \eqref{Bog} yields
\begin{align*}
\|\partial_t\mathscr R_1\bff\|_{L^{p}_x}
&\lesssim \|\nabla\xi_0\cdot\partial_t\bfU_0\|_{W^{-1,p}_x}+\sum_{j=1}^\ell\|\nabla\xi_j\cdot\partial_t\bfV_j\|_{W^{-1,p}_x}\\
&+ \sum_{j=1}^\ell\|\xi_j\Div\big((\mathbb I_{3\times 3}-\bfB_j)^\top\partial_t\bfV_j\big)\|_{W^{-1,p}_x}\\
&+ \sum_{j=1}^\ell\|\xi_j\partial_t\bfB_j^\top:\nabla\bfV_j\|_{W^{-1,p}_x}\\
&=: (R)_1+(R)_2+(R)_3+(R)_4.
\end{align*}
Since $\bfU_0$ solves \eqref{eq:stokeswhole} we infer from \eqref{est:stokeswhole} that
\begin{align*}
\int_{\mathcal I}(R)_1^r\dt\lesssim \int_{\mathcal I}\|\partial_t\bfU_0\|_{W^{-1,p}_x}^r\dt\lesssim T^{r/2}\int_{\mathcal I}\|\bff\|_{L^{p}_x}^r\dt.
\end{align*}
In order to estimate the remaining terms we need
to estimate $\bfV_j$ in terms of $\bfU_j$. This is more complicated than in \cite{Br2} on account of the time-dependence of the transformation $\bfPsi_j\circ\mathscr V_j$. Since this functions is uniformly Lipschitz (which follows from Lipschitz continuity of $\phi_j$, cf. \eqref{J}, and that the fact that $\mathcal Q_j$ is an orthogonal matrix) we have
\begin{align*}
\|\partial_t\bfV_j\|_{W^{-1,p}_x}&=\|\partial_t(\bfU_j\circ\bfPsi_j\circ\mathscr V_j)\|_{W^{-1,p}_x}\\
&\lesssim \|(\partial_t\bfU_j)\circ\bfPsi_j\circ\mathscr V_j)\|_{W^{-1,p}_x}+ \|\nabla\bfU_j\circ\bfPsi_j\circ\mathscr V_j\partial_t\bfPsi_j\circ\mathscr V_j\|_{W^{-1,p}_x}\\&+\|\nabla\bfU_j\circ\bfPsi_j\circ\mathscr V_j\nabla\bfPsi_j\circ\mathscr V_j\partial_t\mathscr V_j\|_{W^{-1,p}_x}\\
&\lesssim \|\partial_t\bfU_j\|_{W^{-1,p}_x}+ \|\nabla\bfU_j\circ\bfPsi_j\partial_t\bfPsi_j\|_{W^{-1,p}_x}\\&+\|\nabla\bfU_j\circ\bfPsi_j\circ\mathscr V_j\nabla\bfPsi_j\circ\mathscr V_j\partial_t\mathscr V_j\|_{W^{-1,p}_x}.
\end{align*}
Choosing $\tilde r$ and $\overline r$ as in \eqref{eq:0301} and using \eqref{eq:dtphi} (together with \eqref{est:ext} and the definition of $\bfPhi_j$ and $\bfPsi_j$) we have for $\tilde p:=\max\{1,3p/(p+3)\}$
\begin{align*}
\|\nabla\bfU_j\circ\bfPsi_j\partial_t\bfPsi_j\|_{L^r(\mathcal I,W^{-1,p}(\Omega))}&\lesssim \bigg(\int_{\mathcal I}\|\nabla\bfU_j\circ\bfPsi_j\partial_t\bfPsi_j\|_{L^{\tilde p}_x}^r\dt\bigg)^{\frac{1}{r}}\\
&\lesssim \bigg(\int_{\mathcal I}\|\nabla\bfU_j\|_{L^p_x}^r\|\partial_t\bfPsi_j\|_{L^{\max\{3,p'\}}_x}^r\dt\bigg)^{\frac{1}{r}}\\
&\lesssim \bigg(\int_{\mathcal I}\|\nabla\bfU_j\|_{L^p_x}^r\|\partial_t\phi_j\|_{L^{\infty}_y}^r\dt\bigg)^{\frac{1}{r}}\\
&\lesssim \|\nabla\bfU_j\|_{L^{\overline r}(\mathcal I,L^p(\Omega))}\big(\|\partial_t\eta\|_{L^{\tilde r}(\mathcal I,L^{\infty}(\omega))}+\|\partial_t\nabla\eta\|_{L^{\tilde r}(\mathcal I,L^{1}(\omega))}\big)\\
&\lesssim \|\nabla\bfU_j\|_{L^{\overline r}(\mathcal I,L^p(\Omega))}\|\partial_t\eta\|_{L^{\tilde r}(\mathcal I,W^{1,q_0}(\omega))}\\
&\lesssim \delta(T)\|\nabla\bfU_j\|_{L^{\overline r}(\mathcal I,L^p(\Omega))}\|\partial_t\eta\|_{L^{r_0}(\mathcal I,W^{1,q_0}(\omega))}\\
&\lesssim \delta(T)\big(\|\partial_t\bfU_j\|_{L^{r}(\mathcal I,L^p(\Omega))}+\|\bfU_j\|_{L^{r}(\mathcal I,W^{2,p}(\Omega))}\big),
\end{align*}
and, similarly,
\begin{align*}
\|\nabla\bfU_j\circ\bfPsi_j\circ\mathscr V_j\nabla\bfPsi_j\circ\mathscr V_j\partial_t\mathscr V_j\|_{L^r(\mathcal I,W^{-1,p}(\Omega))}
&\lesssim \delta(T)\big(\|\partial_t\bfU_j\|_{L^{r}(\mathcal I,L^p(\Omega))}+\|\bfU_j\|_{L^{r}(\mathcal I,W^{2,p}(\Omega))}\big)
\end{align*}
on account of \eqref{eq:dtQ}.
In conclusion, we have
\begin{align*}
\int_{\mathcal I}\|\partial_t\bfV_j\|_{W^{-1,p}_x}^r\dt\lesssim\delta(T)\big(\|\partial_t\bfU_j\|_{L^{r}(\mathcal I,L^p(\Omega))}^r+\|\bfU_j\|_{L^{r}(\mathcal I,W^{2,p}(\Omega))}^r\big).
\end{align*}
With this at hand we obtain from \eqref{est:stokeshalf}
\begin{align*}
\int_{\mathcal I}(R)_2^r\dt\lesssim \sum_{j=1}^\ell\int_{\mathcal I}\|\partial_t\bfV_j\|_{W^{-1,p}_x}^r\dt\lesssim \delta(T)\int_{\mathcal I}\|\bff\|_{L^{p}_x}^r\dt.
\end{align*}
In the following we apply Necas' negative norm theorem in $\Omega_\eta$, which holds since $\partial\Omega_\eta$ is Lipschitz uniformly in time.
Using that $\bfPsi\circ\mathscr V_j$ is Lipschitz we easily obtain from \eqref{eq:Br0} (argiung as in the estimate for $(R)_2^r$ above)
\begin{align*}
\int_{\mathcal I}(R)_3^r\dt&\lesssim  \sum_{j=1}^\ell\int_{\mathcal I}\|\mathbb I_{3\times 3}-\mathbf{B}_j\|_{ L^{\infty}({\Omega})}^r\|\partial_t\bfV_j\|_{L^{p}_x}^r\dt\\
&\lesssim  \delta(\mathrm{Lip}(\partial{\Omega_\eta}))\int_{\mathcal I}\big(\|\partial_t\bfU_j\|_{L^{p}_x}^r+\|\nabla\bfU_j\partial_t(\bfPsi_j\circ\mathscr V_j)\circ(\bfPsi_j\circ\mathscr V_j)^{-1}\|_{L^{p}_x}^r\big)\dt\\
&\lesssim  \delta(\mathrm{Lip}(\partial{\Omega_\eta}))\int_{\mathcal I}\|\bff\|_{L^{p}_x}^r\dt+\int_{\mathcal I}\|\nabla\bfU_j\|_{L^p_x}^r\big(\|\partial_t\eta\|^r_{L^\infty(\omega)}+\|\partial_t\nabla\eta\|^r_{L^1(\omega)}\big)\dt\\
&\lesssim  \delta(\mathrm{Lip}(\partial{\Omega_\eta}))\int_{\mathcal I}\|\bff\|_{L^{p}_x}^r\dt+\delta(T)\big(\|\partial_t\bfU_j\|_{L^{r}(\mathcal I,L^p(\Omega))}^r+\|\bfU_j\|_{L^{r}(\mathcal I,W^{2,p}(\Omega))}^r\big)\\
&\lesssim \delta(\mathrm{Lip}(\partial{\Omega_\eta}))\int_{\mathcal I}\|\bff\|_{L^{p}_x}^r\dt+\delta(T)\int_{\mathcal I}\|\bff\|_{L^{p}_x}^r\dt
\end{align*}
using again \eqref{est:stokeshalf}. Note carefully that the estimate of the second term  requires to take $T$ much smaller than $ \delta(\mathrm{Lip}(\partial{\Omega_\eta}))$ since estimates \eqref{eq:dtQ} and \eqref{eq:dtphi} depend on the size of $\mathcal U^j$ and thus on the reciprocal of $\mathrm{Lip}(\partial{\Omega_\eta})$.
 As far as $(R)_4$ is concerned let us first suppose that $p\geq\frac{3}{2}$ such that $\frac{3p}{p+3}\geq 1$. In this case we have by \eqref{eq:dtphi} and \eqref{eq:dtDphi}
\begin{align*}
\int_{\mathcal I}(R)_4^r\dt&\lesssim\sum_{j=1}^\ell \int_{\mathcal I}
\|\partial_t\bfB_j^\top:\nabla\bfV_j\|_{L^{3p/(p+3)}_x}^r\dt\lesssim\sum_{j=1}^\ell \int_{\mathcal I}
\|\partial_t\bfB_j\|_{L^3_x}^r\|\nabla\bfV_j\|_{L^{p}_x}^r\dt\\
&\lesssim\sum_{j=1}^\ell \int_{\mathcal I}
\big(\|\partial_t\nabla\bfPsi_j\|_{L^3_x}^r+\|\partial_t\nabla\bfPhi_j\circ\bfPsi_j\|_{L^3_x}^r+\|\nabla^2\bfPhi_j\circ\bfPsi_j\partial_t\bfPsi_j\|^r_{L^3_x}\big)\|\bfU_j\|_{W^{1,p}_x}^r\dt\\
&+\sum_{j=1}^\ell \int_{\mathcal I}
\big(\|\partial_t\eta\|^r_{L^\infty_y}+\|\partial_t\nabla\eta\|^r_{L^1_y}\big)\|\bfU_j\|_{W^{1,p}_x}^r\dt\\
&\lesssim\sum_{j=1}^\ell \int_{\mathcal I}
\big(\|\partial_t\bfPhi_j\|_{W^{1,3}_x}^r+\|\nabla^2\bfPhi_j\|_{L^3}^r\|\partial_t\bfPsi_j\|^r_{L^\infty_x}+\|\partial_t\eta\|^r_{W^{1,q_0}_y}\big)\|\bfU_j\|_{W^{1,p}_x}^r\dt\\&\lesssim\sum_{j=1}^\ell \int_{\mathcal I}
\big(\|\partial_t\phi_j\|_{W^{2/3,3}_y}^r+\|\bfPhi_j\|_{W^{5/2,2}_x}^r\|\partial_t\phi_j\|^r_{L^{\infty}_y}+\|\partial_t\eta\|^r_{W^{1,q_0}_y}\big)\|\bfU_j\|_{W^{1,p}_x}^r\dt\\
&\lesssim\sum_{j=1}^\ell \int_{\mathcal I}
\big(\|\partial_t\phi_j\|_{W^{2/3,3}_y}^r+\|\phi_j\|_{W^{2,2}_x}^r\|\partial_t\eta\|^r_{W^{1,q_0}_y}+\|\partial_t\eta\|^r_{W^{1,q_0}_y}\big)\|\bfU_j\|_{W^{1,p}_x}^r\dt\\
&\lesssim\sum_{j=1}^\ell \int_{\mathcal I}
\big(1+\|\eta\|_{W^{2,2}_y}^r\big)\big(1+\|\partial_t\eta\|^r_{W^{1,q_0}_y}\big)\|\bfU_j\|_{W^{1,p}_x}^r\dt
\end{align*}
using also \eqref{est:ext}, the Sobolev embeddings $W^{1,2}_y\hookrightarrow W^{2/3,3}_y$ and $W^{5/2,2}_x\hookrightarrow W^{2,3}_x$ as well as \eqref{lem:9.4.1}. Since $\eta\in L^ \infty_tW^{2,2}_y$ by assumption
we can argue as in \eqref{eq:0301}
to obtain
\begin{align*}
\int_{\mathcal I}(R)_4^r\dt\lesssim \delta(T)\int_{\mathcal I}\|\bff\|_{L^p_x}^r\dt.
\end{align*}
If $p<\frac{3}{2}$ an analogous chain leads to
\begin{align*}
\int_{\mathcal I}(R)_4^r\dt&\lesssim\sum_{j=1}^\ell \int_{\mathcal I}
\|\partial_t\bfB_j^\top:\nabla\bfV_j\|_{L^{1}_x}^r\dt\lesssim\sum_{j=1}^\ell \int_{\mathcal I}
\|\partial_t\bfB_j\|_{L^3_x}^r\|\nabla\bfV_j\|_{L^{3/2}_x}^r\dt\\
&\lesssim\sum_{j=1}^\ell \int_{\mathcal I}
\big(1+\|\eta\|_{W^{2,2}_y}^r\big)\big(1+\|\partial_t\eta\|^r_{W^{1,q_0}_y}\big)\|\bfU_j\|_{W^{1,3/2}_x}^r\dt\\
&\lesssim\sum_{j=1}^\ell \int_{\mathcal I}
\big(1+\|\partial_t\eta\|^r_{W^{1,q_0}_y}\big)\|\bfU_j\|_{W^{1,3/2}_x}^r\dt.
\end{align*}
Now we use the embedding
\begin{align*}
W^{1,r}(\mathcal I,L^p(\mathbb H))\cap L^r(\mathcal I,W^{2,p}(\mathbb H))\hookrightarrow W^{\frac{3}{2p'},r}(\mathcal I,W^{\frac{3-p}{p},p}(\mathbb H))\hookrightarrow L^{\overline r}(\mathcal I,W^{1,3/2}(\mathbb H)),
\end{align*}
where $\overline r=\infty$ if $r>\frac{2p'}{3}$, $\overline r<\infty$ arbitrary if $r=\frac{2p'}{3}$ and $\overline r=\frac{2rp'}{2p'-3r}$ if $r<\frac{2p'}{3}$. This yields by \eqref{est:stokeswhole}, the assumptions on $\eta$ and $r_0>\max\{r,2p'/3\}$
\begin{align*}
\int_{\mathcal I}(R)_4^r\dt
&\lesssim \Big(\delta(T)+\|\eta\|^r_{L^{2p'/3}(\mathcal I,W^{1,q_0}(\omega))}\Big)\sum_{j=1}^\ell\big(\|\partial_t\bfU_j\|_{L^{r}(\mathcal I,L^p(\mathbb H))}^r+\|\bfU_j\|^r_{L^{r}(\mathcal I,W^{2,p}(\mathbb H))}\big)\\
&\lesssim \delta(T)\int_{\mathcal I}\|\bff\|_{L^{p}_x}^r\dt
\end{align*}
and thus the same estimate also in the case $p<\frac{3}{2}$.

In conclusion, we have shown
\begin{align}\label{eq:july28}
\int_{\mathcal I}\|\partial_t\mathscr R_1\bff\|_{L^{p}_x}^r\dt&\leq\,\tfrac{1}{4}\int_{\mathcal I}\|\bff\|_{L^{p}_x}^r\dt,
\end{align}
for $T$ and $\mathrm{Lip}(\partial\Omega_\eta)$ sufficiently small. As far 
as $\Delta\mathscr R_1\bff$ is concerned, we can use \eqref{eq:Br0} and argue as in \cite[proof of Theorem 3.1]{Br2} obtaining
\begin{align}\label{eq:july28B}
\int_{\mathcal I}\|\Delta\mathscr R_1\bff\|_{L^{p}_x}^r\dt&\leq\,\tfrac{1}{4}\int_{\mathcal I}\|\bff\|_{L^{p}_x}^r\dt
\end{align}
choosing $T$ and $\mathrm{Lip}(\partial\Omega_\eta)$  small enough.
Combining \eqref{eq:july24}, \eqref{eq:july28} and \eqref{eq:july28B} implies $\|\mathscr T\|\leq\tfrac{3}{4}$. Recalling \eqref{eq:IS}  we have shown the claim for $T$ sufficiently small, say $T=T_0\ll1$. It is easy to extend it to the whole interval.
Let $(\bfu,\pi)$ be the solution in $[0,T]$. In order to obtain a solution on the whole interval we consider a partition of unity
$(\psi_k)_{k=1}^K$ on $[0,T]$ such that $\mathrm{spt}(\psi_k)\subset(\alpha_k,\alpha_k+T_0]$ for some $(\alpha_k)_{k=2}^K\subset[0,T]$ and $\alpha_1=0$. The functions $(\bfu_k,\pi_k)=(\psi_k\bfu,\psi_k\pi)$ are the unique solutions to
\begin{align*}
\partial_t\bfu_k=\Delta \bfu_k-\nabla\pi_k+\bff+\psi_k'\bfu,\quad\Div\bfu_k=0,\quad\bfu_k|_{(\alpha_k,\alpha_k+T_0)\times\partial{\Omega_\eta}}=0,\quad \bfu_k(\alpha_k,\cdot)=0.
\end{align*}
Applying the result proved for the interval $[0,T_0]$ and noticing that $\bfu=\sum_{k=1}^K\bfu_k$ and $\pi=\sum_{k=1}^K\pi_k$ proves the claim in the general case.
\end{proof}

\section{Partial regularity}
\label{sec:blowup}

\subsection{Boundary suitable weak solutions}\label{sec:mainresults}
In the following we give a definition of boundary suitable weak solutions adapting the notation from \cite{SeShSo} and \cite{Br2}. 
For that purpose we fix two numbers $r_\ast\in(1,2)$ and $s_\ast\in(1,\frac{3}{2})$ such that $\frac{1}{r_\ast}+\frac{3}{2s_\ast}\geq 2$. The choice comes from the fact that the convective term $(\bfu\cdot\nabla)\bfu$ of a weak solution to \eqref{1} belongs to $L^{r_\ast}_tL^{s_\ast}_x$. Later on we will choose $r_\ast=5/3$ and, accordingly, $s_\ast=15/14$.
\begin{definition}[Boundary suitable weak solution] \label{def:weakSolution}
Let $\eta\in C(\overline{\mathcal I};C^{1}(\omega))$ with $\|\eta\|_{L^\infty_{t,y}}<L$.
Let $(\bff, \bfu_0)$ be a dataset such that
\begin{equation}
\begin{aligned}
\label{dataset}
&\bff \in L^{r_\ast}\big(\mathcal I; L^{s_\ast}(\Omega_\eta)\big),\quad 
\bfu_0\in W^{2,s_\ast}\cap W^{1,2}_{0,\mathrm{\Div}}(\Omega_{\eta(0)}).
\end{aligned}
\end{equation} 
We call the tuple
$(\bfu,\pi)$
a boundary suitable weak solution to the Navier--Stokes system \eqref{1} with data $(\bff, \bfu_0)$ provided that the following holds:
\begin{itemize}
\item[(a)] The velocity field $\bfu$ satisfies
\begin{align*}
 \bfu \in L^\infty \big(\mathcal I; L^2(\Omega_\eta) \big)\cap  L^2 \big(\mathcal I; W^{1,2}_{0,\Div}(\Omega_\eta) \big) \cap L^{r_\ast}(\mathcal I;W^{2,s_\ast}(\Omega_\eta))\cap W^{1,r_\ast}(\mathcal I;L^{s_\ast}(\Omega_\eta)).
\end{align*}
\item[(b)] The pressure $\pi$ satisfies
$$\pi\in L^{r_\ast}(\mathcal I;W^{1,s_\ast}_\perp(\Omega_\eta)).$$
\item[(c)] We have
\begin{align*}
\partial_t \bfu+(\bfu\cdot\nabla)\bfu&=\Delta\bfu-\nabla \pi+\bff,\quad \Div\bfu=0,\quad\bfu|_{\partial\Omega_\eta}=0,\quad \bfu(0,\cdot)=\bfu_0,
\end{align*}
a.a. in $\mathcal I\times\Omega_\eta$.
\item[(d)] for any $\zeta\in C_c^\infty(\mathcal I\times\R^3)$ with $\zeta\geq0$
 and $\mathrm{spt}(\zeta) \cap (\mathcal I\times \partial\Omega_\eta)\neq\emptyset$ 
the local energy inequality 
\begin{equation}\label{energylocal0}
\begin{split}
\int_ {\mt_\eta}\frac{1}{2}&\zeta\big| \bfu(t)\big|^2\dx+\int_0^t\int_ {\mt_\eta}\zeta|\nabla \bfu |^2\dx\ds\\& \leq\int_0^t\int_{\mt_\eta}\frac{1}{2}\Big(| \bfu|^2(\partial_t\zeta+\Delta\zeta)+\big(|\bfu|^2+2\pi\big)\bfu\cdot\nabla\zeta\Big)\dxs+\int_0^t\int_{\mt_\eta}\zeta\bff\cdot\bfu\dxs.
\end{split}
\end{equation}
holds.
\end{itemize}
\end{definition}
The next theorem shows that, under suitable assumptions on $\eta$, there a boundary suitable weak solution to \eqref{1}.
\begin{theorem}\label{thm:existence}
 Suppose that $\eta\in C(\overline{\mathcal I};B^{\theta}_{\varrho,s_*}\cap W^{2,2}\cap C^{1}(\omega))$ for some $\varrho\geq 2s_*/(s_*-1)$ and $\theta>2-1/s_*$, that $\sup_{\mathcal I} \mathrm{Lip}(\partial\Omega_{\eta(t)})$ is sufficiently small and that \eqref{eq:L0} holds. Suppose further that $\partial_t\eta\in L^{r_0}(\mathcal I; W^{1,q_0}(\omega))$ for some $r_0>\max\{2,r_*,2s_\ast'/3\}$ and $q_0>2$. Then there is a boundary suitable weak solution $(\bfu,\pi)$ to the Navier--Stokes equations \eqref{1} 
in the sense of Definition \ref{def:weakSolution}.
\end{theorem}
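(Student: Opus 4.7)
The plan is to decouple existence from regularity: first construct a Leray--Hopf-type weak solution with energy regularity, then bootstrap it via Theorem \ref{thm:stokesunsteadymoving}. The decisive observation is that the convective term associated to any energy-class three-dimensional solution automatically lies in $L^{r_*}(\mathcal I;L^{s_*}(\Omega_\eta))$ with the exponents $r_*=5/3$, $s_*=15/14$ fixed in Section \ref{sec:mainresults}, so that Theorem \ref{thm:stokesunsteadymoving} applied to the associated Stokes problem returns exactly the regularity demanded by Definition \ref{def:weakSolution}(a)--(b).

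For the first step I would follow the standard Galerkin procedure in moving domains, as carried out in \cite{LeRu}: pull the system back to the reference domain $\Omega$ through a Hanzawa-type diffeomorphism, project onto finite-dimensional divergence-free subspaces, derive the usual a priori energy estimate, and pass to the limit via Aubin--Lions compactness. The hypothesis $\eta\in C(\overline{\mathcal I};C^1(\omega))$ guarantees the uniform Lipschitz character of the transformation needed to implement the scheme, yielding a distributional solution
\begin{align*}
\bfu\in L^\infty(\mathcal I;L^2(\Omega_\eta))\cap L^2(\mathcal I;W^{1,2}_{0,\Div}(\Omega_\eta))
\end{align*}
of \eqref{1} attaining the initial datum $\bfu_0$.

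For the second step, Ladyzhenskaya's interpolation places $\bfu\in L^{10}(\mathcal I;L^{30/13}(\Omega_\eta))$, so H\"older together with $\nabla\bfu\in L^2(\mathcal I;L^2(\Omega_\eta))$ gives $(\bfu\cdot\nabla)\bfu\in L^{r_*}(\mathcal I;L^{s_*}(\Omega_\eta))$; combined with $\bff\in L^{r_*}(\mathcal I;L^{s_*}(\Omega_\eta))$, the right-hand side of the associated Stokes system lies in this class. The hypotheses of Theorem \ref{thm:existence} match precisely those of Theorem \ref{thm:stokesunsteadymoving} with $p=s_*$ and $r=r_*$: in particular $2s_*/(s_*-1)=30$, $2-1/s_*=16/15$ and $2s_*'/3=10$, so the bound on $r_0$ is exactly what is needed. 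Invoking Theorem \ref{thm:stokesunsteadymoving} and identifying its output with $\bfu$ via uniqueness of weak Stokes solutions in the energy class yields
\begin{align*}
\bfu\in L^{r_*}(\mathcal I;W^{2,s_*}(\Omega_\eta))\cap W^{1,r_*}(\mathcal I;L^{s_*}(\Omega_\eta)),\qquad\pi\in L^{r_*}(\mathcal I;W^{1,s_*}_\perp(\Omega_\eta)),
\end{align*}
establishing items (a), (b) and (c) of Definition \ref{def:weakSolution}.

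The local energy inequality (d) would then be derived by multiplying the momentum equation by $\zeta\bfu$ and integrating over $\Omega_\eta\times(0,t)$. With the regularity just obtained, $\zeta\bfu$ is a legitimate test function, $t\mapsto\int_{\Omega_{\eta(t)}}\zeta(t,\cdot)|\bfu(t,\cdot)|^2\dx$ is absolutely continuous and, because $\zeta\in C^\infty_c(\mathcal I\times\R^3)$ forces $\zeta(0,\cdot)=0$, the chain-rule identity
\begin{align*}
\int_0^t\!\int_{\Omega_\eta}\zeta\,\partial_t\bfu\cdot\bfu\dx\ds=\tfrac12\int_{\Omega_{\eta(t)}}\zeta(t)|\bfu(t)|^2\dx-\tfrac12\int_0^t\!\int_{\Omega_\eta}|\bfu|^2\partial_t\zeta\dx\ds
\end{align*}
is valid. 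Integrating by parts in space, using $\bfu|_{\partial\Omega_\eta}=0$ and $\Div\bfu=0$, produces the local energy \emph{equality}, of which (d) is a special case. I expect the principal difficulty to be the uniqueness identification in Step 2: it requires a Reynolds-type transport argument on the moving domain which relies crucially on the uniform Lipschitz control of $\partial\Omega_{\eta(t)}$ furnished by the smallness assumption on $\sup_{\mathcal I}\mathrm{Lip}(\partial\Omega_{\eta(t)})$, and on reading $(\bfu\cdot\nabla)\bfu$ in the divergence form $\Div(\bfu\otimes\bfu)$ in order to pair against the energy-class difference of the two candidate Stokes solutions.
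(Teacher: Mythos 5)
Your route (construct a Leray--Hopf solution first, bootstrap its regularity through Theorem \ref{thm:stokesunsteadymoving} with $(r,p)=(r_\ast,s_\ast)$, then verify the local energy inequality a posteriori by testing with $\zeta\bfu$) is genuinely different from the paper's: there, the convective term and the boundary are regularised, Theorem \ref{thm:stokesunsteadymoving} with $p=r=2$ makes the approximate solutions \emph{strong}, the local energy inequality is derived at the approximate level by testing with $\zeta\bfu_\xi$, uniform $L^{r_\ast}_tW^{2,s_\ast}_x\cap W^{1,r_\ast}_tL^{s_\ast}_x$ bounds come from a second application of Theorem \ref{thm:stokesunsteadymoving}, and one passes to the limit, the inequality surviving by lower semicontinuity. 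Your exponent bookkeeping in the bootstrap step is correct ($(\bfu\cdot\nabla)\bfu\in L^{5/3}_tL^{15/14}_x$, $2s_\ast/(s_\ast-1)=30$, $2s_\ast'/3=10$), but the structural difference is not cosmetic: it sits exactly where your argument breaks.

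The gap is the claim that, with the regularity of Definition \ref{def:weakSolution}(a)--(b), $\zeta\bfu$ is a legitimate test function and the local energy \emph{equality} follows. The exponents are too weak for the critical pairings: for $\int_0^t\int\zeta\,\partial_t\bfu\cdot\bfu$ you need $\bfu\in L^{r_\ast'}_tL^{s_\ast'}_x=L^{5/2}_tL^{15}_x$ locally, while the energy class plus $\bfu\in L^{5/3}_tW^{2,15/14}_x$ (note $W^{2,15/14}(\R^3)\hookrightarrow L^{15/8}$ only) gives nothing beyond $L^2_tL^6_x$-type integrability; for the cubic term $\int_0^t\int\zeta(\bfu\cdot\nabla)\bfu\cdot\bfu$ you need essentially $\bfu\in L^4_{t,x}$ (so that $|\bfu|^2\in L^2_{t,x}$ pairs with $\nabla\bfu\in L^2_{t,x}$), whereas the energy class gives only $L^{10/3}_{t,x}$ and the $W^{2,s_\ast}$ bootstrap does not improve spatial integrability. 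So neither the chain-rule identity nor the convective contribution is absolutely convergent, and this is precisely the classical obstruction that forces one (as in Caffarelli--Kohn--Nirenberg and in the paper) to establish the local energy inequality for an approximation whose solutions are genuinely strong and whose convective term is mollified, and then pass to the limit accepting an inequality; if (a)--(c) implied (d), clause (d) of Definition \ref{def:weakSolution} would be redundant. A secondary gap: identifying the Leray--Hopf solution with the maximal-regularity Stokes solution requires a weak--strong uniqueness statement for the Stokes system in the moving domain with right-hand side only in $L^{r_\ast}_tL^{s_\ast}_x$; you flag this Reynolds-transport argument but do not supply it, and the uniqueness cited in the paper (\cite{NRL}) is for the $p=2$ energy setting. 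The paper's construction avoids both issues because the bootstrap is performed on the regularised problem, whose solution is already the unique strong one, so testing with $\zeta\bfu_\xi$ is legal there.
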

\begin{proof}
We consider an approximate system, where the convective term and the boundary function are regularised. We construct a solution to the system on $\Omega_{\mathfrak{r}^\xi\xi^\xi}$ with material derivation  $\partial_t +\mathcal{R}^\xi\bfu^\xi\cdot\nabla$ where $\xi>0$ is a fixed regularization kernel. Here $\mathcal R^\xi$ extends functions to $\mathcal I\times\R^3$ and mollifiers in space-time.
We denote by $\mathfrak r^\xi$
a regularisation operator acting on the periodic functions defined on $\omega$ composed with a temporal regularisation on $\mathcal I$. For $0<\xi\ll1$ we are looking for a function
$\bfu_\xi\in L^ \infty(I,L^2(\Omega_{\mathfrak r^\xi\eta}))\cap  L^2(I,W^{1,2}_{0,\Div}(\Omega_{\mathfrak r^\xi \eta}))$ satisfying
\begin{align}\begin{aligned}
\int_{\mathcal I}  \frac{\mathrm{d}}{\dt}\int_{\Omega_{\mathfrak{r}^\xi\eta(t)}}\bfu_\xi  \cdot {\bfvarphi}\dx
&\dt 
=\int_{\mathcal I}  \int_{\Omega_{\mathfrak{r}^\xi\eta(t)}}\big(  \bfu_\xi\cdot \partial_t  {\bfvarphi} -\tfrac{1}{2}(\mathcal{R}^\xi\bfu_\xi\cdot\nabla)\bfu_\xi \cdot  {\bfvarphi}  +\tfrac{1}{2}(\mathcal{R}^\xi\bfu_\xi\cdot\nabla){\bfvarphi} \cdot\bfu_\xi  \big) \dx\dt
\\&
-  \int_{\mathcal I}  \int_{\Omega_{\mathfrak{r}^\xi\eta(t)}}
\big(\mu \nabla \bfu_\xi :\nabla {\bfvarphi}- \mathcal{R}^\xi\bff\cdot{\bfvarphi} \big) \dx\dt
 \end{aligned}\label{distrho}
\end{align}
for all  $\bfvarphi \in  C^\infty(\overline{I}\times \R^3; \R^3)$ with ${\bfvarphi}(T,\cdot)=0$, $\Div{\bfvarphi}=0$ and $\bfvarphi|_{\partial\Omega_{\mathfrak{r}^\xi\eta}}= 0$ as well as $\bfu_ \xi(0)=\mathcal{R}^\xi\bfu_0$.
The existence of $\bfu_\xi$ can be shown
as in \cite[Prop. 3.4]{LeRu} (our situation is much easier since the boundary is given and the boundary conditions are trivial). It also satisfies a global energy inequality which implies 
\begin{align}\label{eq:2911}
\sup_{\mathcal I}\int_{\Omega_{\mathfrak r^\xi \eta}}|\bfu_\xi|^2 \dx+\int_{\mathcal I}\int_{\Omega_{\mathfrak r^\xi \eta}}|\nabla\bfu_\xi|^2 \dxt\leq\,c
\end{align}
uniformly in $\xi$.
One can understand 
\eqref{distrho} as Sokes system with right-hand side
\begin{align*}
\bfg_\xi:=-\frac{1}{2}(\mathcal{R}^\xi\bfu_\xi\cdot\nabla)\bfu_\xi-\frac{1}{2}(\bfu_\xi\cdot\nabla)\mathcal{R}^\xi\bfu_\xi+\mathcal{R}^\xi\bff
\end{align*}
in the moving domain $\Omega_{\mathfrak r^\xi \eta}$. Since $ \bfg_\xi$ belongs to $L^2(\mathcal I,L^2(\Omega_{\mathfrak r^\xi \eta}))$ and the initial datum
$\mathcal{R}^\xi\bfu_0$ belongs to $L^2(\Omega_{\mathfrak r^\xi \eta(0)})$ (with norms depending on $\xi$) we can apply Theorem \ref{thm:stokesunsteadymoving} with $p=r=2$ obtaining
\begin{align*}
\bfu_\xi\in L^2(\mathcal I,W^{2,2}(\Omega_{\mathfrak r^\xi \eta})),\quad \partial_t \bfu_\xi \in L^2(\mathcal I,L^2(\Omega_{\mathfrak r^\xi \eta})),\quad \pi_\xi\in L^2(\mathcal I,W^{1,2}(\Omega_{\mathfrak r^\xi \eta})),
\end{align*}
where $\pi_\xi$ is the associated pressure function.
This means we have a strong solution to \eqref{distrho}. In particular, we can test with 
$\zeta\bfu_\xi$, where $\zeta$ is a localisation function as required in \eqref{energylocal0}. This yields a version of \eqref{energylocal0} for $\bfu_ \xi$ in the domain  $\Omega_{\mathfrak r^\xi \eta}$.

The aim is now to pass to the limit in $\xi$.
The operator $\mathfrak r^\xi$ converges on all Besov spaces with $p<\infty$. 
Furthermore, it does not expand the $W^{1,\infty}$-norm, which is sufficient to apply 
 Theorem \ref{thm:stokesunsteadymoving} obtaining
 \begin{align}\label{eq:2505}
 \begin{aligned}
\|\partial_t\bfu_\xi\|_{L^{r_\ast}(\mathcal I;L^{s_\ast}(\Omega_{\mathfrak r^\xi \eta}))}&+\|\bfu_\xi\|_{L^{r_\ast}(\mathcal I;W^{2,s_\ast}(\Omega_{\mathfrak r^\xi \eta}))}
+\|\pi_\xi\|_{L^{r_\ast}(\mathcal I;W^{1,s_\ast}(\Omega_{\mathfrak r^\xi \eta}))}\\&\lesssim \|\mathcal{R}^\xi\bff\|_{L^{r_\ast}(\mathcal I;L^{s_\ast}(\Omega_{\mathfrak r^\xi \eta})}+\|(\mathcal{R}^\xi\bfu_\xi\cdot\nabla)\bfu_\xi\|_{L^{r_\ast}(\mathcal I;L^{s_\ast}(\Omega_{\mathfrak r^\xi \eta}))}\\&+\|(\bfu_ \xi\cdot\nabla)\mathcal{R}^\xi\bfu_\xi\|_{L^{r_\ast}(\mathcal I;L^{s_\ast}(\Omega_{\mathfrak r^\xi \eta}))}+\|\mathcal{R}^\xi\bfu_0\|_{W^{2,s_\ast}(\Omega_{\mathfrak r^\xi \eta(0)})}
\end{aligned}
\end{align}
uniformly in $\xi$. By \eqref{eq:2911}, our assumptions on the data as well as the properties of $\mathcal R^\xi$ the right-hand side is uniformly bounded.
With \eqref{eq:2505} at hand, we obtain (after passing to a subsequence) limit objects with the claimed regularity as well as compactness of $\bfu_\xi$ and can pass to the limit in the momentum equation and local energy inequality.
Note that this requires also the uniform convergence of $\mathfrak r^\xi\eta$ to $\eta$ which follows from the properties of $\mathfrak r^\xi$.
\end{proof}
The following theorem is the main result of this paper.
\begin{theorem}\label{thm:main}
 Suppose that $\eta\in C(\overline{\mathcal I};B^{\theta}_{p,p}\cap W^{2,2}\cap C^{1}(\omega))$ for some $p>\frac{15}{4}$ and some $\theta>2-1/p$, that $\sup_{\mathcal I} \mathrm{Lip}(\partial\Omega_{\eta(t)})$ is sufficiently small and that \eqref{eq:L0} holds. Suppose further that $\partial_t\eta\in L^{3}(\mathcal I; W^{1,q_0}(\omega))$ for some $q_0>2$. 
There is a number $\varepsilon_{0}>0$ such that the following holds. Let $(\bfu,\pi)$ be a boundary suitable weak solution to the Navier--Stokes system \eqref{1} in the sense of Definition \ref{def:weakSolution}.
 Let $(t_0,x_0)\in \mathcal I\times\partial\Omega_\eta$ be such that 
\begin{align}
r^{-2}\int_{t_0-r^2}^{t_0+r^2}\int_{\Omega_\eta\cap \mathcal B_r(x_0)}|\bfu|^3\dxt+\bigg(r^{-5/3}\int_{t_0-r^2}^{t_0+r^2}\int_{ \Omega_\eta\cap \mathcal B_r(x_0)}|\pi|^{5/3}\dx\dt\bigg)^{\frac{9}{5}}<\varepsilon_0
\end{align}
for some $r\ll1$.
Then we have $\bfu\in C^{0,\alpha}(\overline{\mathcal U}(t_0,x_0)\cap\overline{\mathcal I}\times \overline{\Omega}_\eta)$ for some $\alpha>0$ and a neighbourhood $\mathcal U(t_0,x_0)$ of $(t_0,x_0)$.
\end{theorem}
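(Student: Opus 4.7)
The plan is to follow the classical blow-up strategy of \cite{CKN,SeShSo,Br2}, adapted to the time-dependent setting via the flattening of Section \ref{sec:repara} and the maximal regularity result Theorem \ref{thm:stokesunsteadymoving}. First I would localize around $(t_0, x_0)$: pick $y_\star \in \omega$ with $x_0 = \bfvarphi(y_\star) + \eta(t_0,y_\star)\bfn(y_\star)$, apply the construction of Section \ref{sec:repara} to obtain a chart $\bfPhi$ with inverse $\bfPsi$ flattening the moving boundary near $x_0$, and transform $(\bfu, \pi)$ to $(\bfv, \mathfrak p) := (\bfu\circ\bfPhi, \pi\circ\bfPhi)$. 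The pair $(\bfv, \mathfrak p)$ solves a perturbed Navier--Stokes system on a half-space cylinder structurally analogous to \eqref{eq:Stokesback} together with the transformed convective term, where the coefficients $\bfA$ and $\bfB$ are close to the identity in the Sobolev multiplier norm by \eqref{eq:Br0}, and the extra time-dependent source terms are controlled by $\partial_t\phi$ and $\partial_t \mathcal Q^\eta_{t,\star}$ via \eqref{eq:dtphi}--\eqref{eq:dtDphi}. The local energy inequality (d) of Definition \ref{def:weakSolution} transfers to this geometry.

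Next I would introduce the scale-invariant excess
\begin{equation*}
E(z_0, r) := r^{-2}\int_{Q_r^+(z_0)} |\bfv|^3 \dz + \Bigl(r^{-5/3}\int_{Q_r^+(z_0)} |\mathfrak p - (\mathfrak p)_{Q_r^+(z_0)}|^{5/3}\,\dz\Bigr)^{9/5},
\end{equation*}
where $Q_r^+(z_0)$ is the half parabolic cylinder, and aim at an excess-decay dichotomy: there exist $\theta \in (0,1)$, $\alpha > 0$ and $\varepsilon_0 > 0$ such that $E(z_0, r) < \varepsilon_0$ implies $E(z_0, \theta r) \leq \theta^{2\alpha} E(z_0, r)$. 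Once available, combined with the hypothesis this can be iterated and transferred to points in a spacetime neighbourhood of $z_0$, yielding a Campanato-type estimate which furnishes $\bfv \in C^{0,\alpha}$ near the origin and hence $\bfu \in C^{0,\alpha}$ near $(t_0, x_0)$. The decay is established by contradiction: assuming a sequence $z_k \to 0$, $r_k \to 0$ with $\lambda_k^2 := E(z_k, r_k) \to 0$ violating it, the rescaled fields
\begin{equation*}
\bfv_k(t,x) := \lambda_k^{-1}\bfv\bigl(z_k + (r_k^2 t, r_k x)\bigr), \qquad \mathfrak p_k(t,x) := \lambda_k^{-1} r_k\,\bigl[\mathfrak p(z_k + (r_k^2 t, r_k x)) - c_k\bigr]
\end{equation*}
with appropriate constants $c_k$ are uniformly bounded by $E(z_k, r_k) = \lambda_k^2$ together with the local energy inequality and Theorem \ref{thm:stokesunsteadymoving}. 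In the limit, the convective term is negligible (it scales as a positive power of $r_k \lambda_k$), the perturbations $\mathbb I - \bfA$ and $\mathbb I - \bfB$ vanish in the multiplier norm by \eqref{eq:Br0} applied at the shrinking scale, and the time-dependent contributions coming from $\partial_t \bfPhi$, $\partial_t \bfPsi$ and $\partial_t \mathcal Q^\eta_{t,\star}$ disappear by an estimate in the spirit of \eqref{eq:0301}. The limit $(\bar\bfv, \bar{\mathfrak p})$ is then a weak solution of the linear Stokes system on a half-space cylinder with zero boundary datum, whose classical regularity (see \cite{So,SeShSo}) yields excess decay by any prescribed factor, contradicting the choice of the sequence.

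The hard part will be the time-dependence of the transformation, which has no counterpart in \cite{Br2}: the $\mathscr S_9$- and $(R)_4$-type contributions from \eqref{eq:4.10}--\eqref{eq:4.11} carry $\partial_t$-derivatives of the chart that must be shown negligible after the blow-up rescaling. This is secured precisely by the assumption $\partial_t \eta \in L^3(\mathcal I; W^{1,q_0}(\omega))$ with $q_0 > 2$: the exponent $3$ matches the parabolic scaling of the $L^3$-based excess, while the embedding $W^{1,q_0}(\omega) \hookrightarrow L^\infty(\omega)$ supplies the pointwise control of $\partial_t\phi$ used in \eqref{eq:dtphi}. A secondary technical point is the non-local character of the pressure, which requires decomposing $\mathfrak p$ into a harmonic part with good interior estimates and a local Stokes pressure whose rescaled norm is uniformly bounded via Theorem \ref{thm:stokesunsteadymoving}, so that the pressure term of the excess survives the passage to the limit.
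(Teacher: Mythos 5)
Your overall strategy (flatten the moving boundary as in Section \ref{sec:repara}, run a compactness/blow-up contradiction argument against a Stokes comparison problem, then iterate and conclude via Campanato) is the same as the paper's, but two steps in your write-up do not go through as stated. First, the scaling bookkeeping of the blow-up is inconsistent. Since the excess is cubic in the velocity you must normalise with $\lambda_k^3:=E(z_k,r_k)$, not $\lambda_k^2$; more seriously, with $\bfv_k=\lambda_k^{-1}\bfv(z_k+(r_k^2t,r_kx))$ the pressure that makes the rescaled equation close is $\lambda_k^{-1}r_k\,[\mathfrak p-c_k]$, and its $L^{5/3}(Q^+_1)$-norm is controlled by the excess only up to a factor $r_k^{-5/3}$, so it is \emph{not} uniformly bounded and the compactness step for the pressure fails; the excess-consistent scaling $\lambda_k^{-1}r_k^2[\mathfrak p-c_k]$ is bounded but then kills the pressure gradient in the limit equation. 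This velocity/pressure scaling mismatch is precisely why the paper does not blow up with shrinking radii at all: Lemma \ref{Lemma} is a one-scale statement on $Q_1^+$, the contradiction sequence consists of a sequence of \emph{solutions} $(\overline\bfu_m,\overline\pi_m)$ together with a sequence of boundary functions $\eta_m$ satisfying the uniform bounds \eqref{eq:etam}--\eqref{eq:L0m}, only the amplitude is normalised by $\lambda_m$, and the passage to small scales is performed afterwards by the standard iteration of \cite{EsSeSv}, where the weight $r^3$ in the excess $\mathscr E_r$ absorbs exactly this mismatch.

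Second, the decay dichotomy you aim at, $E(z_0,\theta r)\le\theta^{2\alpha}E(z_0,r)$ for the scale-invariant excess, is too weak to give the conclusion: iterating it only yields $\dashint_{Q^+_\rho}|\bfv|^3\lesssim\rho^{2\alpha-3}$, which is a Campanato-type estimate only if $\alpha>3/2$, whereas the exponent available from this argument is $\alpha=3(\tfrac25-\tfrac3{2p})<\tfrac65$. What must decay is the mean-value excess of Lemma \ref{Lemma} (equivalently, a gain of $\tau^{2\alpha+3}$ for the scale-invariant quantities), and for the final H\"older estimate the oscillation version $\widetilde{\mathscr E}$ together with an \emph{interior} counterpart of the decay lemma (Lemmas \ref{lem:31int}--\ref{lem:32int}) for points near, but not on, the boundary -- neither of which your proposal supplies. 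Finally, your claim that $\mathbb I-\bfA$ and $\mathbb I-\bfB$ "vanish in the multiplier norm at the shrinking scale" is unjustified as stated: the Lipschitz part $\|\nabla\phi\|_{L^\infty}$ is scale-invariant and only stays of size $\delta$, so the blow-up limit is not the flat Stokes system; the paper instead keeps the $\delta$-small coefficients in the limit problem \eqref{eq:eqlimit} and invokes the perturbed-Stokes regularity theory of Lemmas \ref{lem:31}--\ref{lem:32int} (the maximal regularity Theorem \ref{thm:stokesunsteadymoving} is used for the \emph{existence} of boundary suitable weak solutions, not inside the decay lemma). Your identification of the genuinely new difficulty -- the time-dependent chart terms controlled through \eqref{eq:dtphi}, \eqref{eq:dtPsim} and the assumption $\partial_t\eta\in L^3_tW^{1,q_0}_y$ -- is correct and matches the paper.
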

%
%
The proof of Theorem \ref{thm:main} will be given in the next subsection.
Denoting by $\mathcal H^s_{\mathrm{para}}$ the $s$-dimensional parabolic Hausdorff measure we obtain the following corollary from Theorem \ref{thm:main} concerning the size of the singular set.
\begin{corollary}\label{thm:main'}
Suppose the assumptions from  Theorem \ref{thm:main} hold.
Then there is a solution $(\bfu,\pi)$ to the Navier--Stokes equations \eqref{1} and a closed set $\Sigma\subset \mathcal I\times \partial\Omega_\eta$ with $\mathcal H^{5/3}_{\mathrm{para}}(\Sigma)=0$ such that for any $(t_0,x_0)\in \mathcal I\times \partial\Omega_\eta \setminus\Sigma$ we have $\bfu\in C^{0,\alpha}(\overline{\mathcal U}(t_0,x_0)\cap \overline{\mathcal{I}}\times\overline{\Omega}_\eta)$ for some $\alpha>0$ and a neighbourhood $\mathcal U(t_0,x_0)$ of $(t_0,x_0)$.
\end{corollary}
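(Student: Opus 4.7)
The plan is a CKN-style Vitali covering argument that turns the $\varepsilon$-regularity statement of Theorem~\ref{thm:main} into a Hausdorff-dimension bound on the singular set. Existence of the underlying boundary suitable weak solution is supplied by Theorem~\ref{thm:existence}. First I would define
\[
\Sigma:=\big\{(t_0,x_0)\in\mathcal I\times\partial\Omega_\eta:\,\Phi(t_0,x_0,r)\geq\varepsilon_0\text{ for every }r\ll 1\big\},
\]
where $\Phi(t_0,x_0,r)$ denotes the parabolic density on the left-hand side of the hypothesis of Theorem~\ref{thm:main}. Continuity of $(t_0,x_0)\mapsto\Phi(t_0,x_0,r)$ for each fixed $r$ makes $\Sigma$ closed in $\mathcal I\times\partial\Omega_\eta$, and by Theorem~\ref{thm:main} every point outside $\Sigma$ is H\"older regular; so it remains only to show $\mathcal H^{5/3}_{\mathrm{para}}(\Sigma)=0$.

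The core step is as follows. For $\delta>0$ and each $(t,x)\in\Sigma$ I would choose a parabolic ball $Q_{r(t,x)}$ with $r(t,x)<\delta/5$ and $\Phi(t,x,r(t,x))\geq\varepsilon_0$, and then extract via a parabolic Vitali lemma a countable disjoint subfamily $\{Q_{r_i}\}$ whose $5$-enlargement still covers $\Sigma$. Splitting the indices into a pressure-dominant part $P$ (where $\int_{Q_{r_i}}|\pi|^{5/3}\geq c\,r_i^{5/3}$) and a velocity-dominant part $V$ (where $\int_{Q_{r_i}}|\bfu|^3\geq c\,r_i^{2}$) separates the two contributions: the $P$-sum gives $\sum_Pr_i^{5/3}\lesssim\int_{U_\delta^P}|\pi|^{5/3}$ at once, whereas the naive $V$-estimate would only yield $\mathcal H^2_{\mathrm{para}}$-control. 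To cure this I would invoke the higher integrability $\bfu\in L^{10/3}(\mathcal I\times\Omega_\eta)$ (from the energy class of Definition~\ref{def:weakSolution} combined with the Gagliardo--Nirenberg inequality on the uniformly Lipschitz $\Omega_\eta$) and use H\"older with exponents $10/9$, $10$ to insert the factor $|Q_{r_i}|^{1/10}\approx r_i^{1/2}$. A short rearrangement then promotes $r_i^2\leq C\int_{Q_{r_i}}|\bfu|^3$ to $r_i^{5/3}\leq C\int_{Q_{r_i}}|\bfu|^{10/3}$, so that $\sum_Vr_i^{5/3}\lesssim\int_{U_\delta^V}|\bfu|^{10/3}$.

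Adding the two partial sums produces $\sum_i(5r_i)^{5/3}\lesssim\int_{U_\delta}\big(|\bfu|^{10/3}+|\pi|^{5/3}\big)\dx\,\dt$, where $U_\delta$ is the disjoint union of the selected cylinders. The bound $r_i<\delta$ forces $|Q_{r_i}|\lesssim\delta^{10/3}r_i^{5/3}$, so that $|U_\delta|\to 0$ as $\delta\to 0$; since both integrands lie in $L^1(\mathcal I\times\Omega_\eta)$---the pressure one because $\pi\in L^{r_\ast}_tW^{1,s_\ast}_x$ with $(r_\ast,s_\ast)=(5/3,15/14)$ embeds into $L^{5/3}_{t,x}$ by the Sobolev inequality---absolute continuity of the Lebesgue integral sends the right-hand side to zero, giving $\mathcal H^{5/3}_{\mathrm{para}}(\Sigma)=0$. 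The main obstacle I expect is the parabolic Vitali covering on the moving Lipschitz boundary together with the transfer of the global integrability of $\bfu$ and $\pi$ up to the boundary, but both are routine consequences of the uniform Lipschitz control on $\partial\Omega_\eta$ provided by the hypotheses on $\eta$.
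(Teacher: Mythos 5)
Your proposal is correct and is essentially the argument the paper intends: the corollary is left as the standard CKN-type consequence of the $\varepsilon$-regularity statement, i.e.\ a parabolic Vitali covering of the set where the density stays above $\varepsilon_0$, with the velocity contribution upgraded from the naive $r_i^2$-bound to $r_i^{5/3}$ via $\bfu\in L^{10/3}_{t,x}$ (energy class plus interpolation, uniform in time by the uniform Lipschitz bound) and the pressure contribution handled through $\pi\in L^{5/3}_tW^{1,s_\ast}_x\hookrightarrow L^{5/3}_{t,x}$. Your exponent arithmetic ($\int_{Q_{r_i}}|\bfu|^{10/3}\gtrsim r_i^{5/3}$, $\int_{Q_{r_i}}|\pi|^{5/3}\gtrsim r_i^{5/3}$) is exactly what yields $\mathcal H^{5/3}_{\mathrm{para}}(\Sigma)=0$, and the closedness of $\Sigma$ and the existence input from Theorem \ref{thm:existence} are treated as in the paper.
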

\begin{remark}
Corollary \ref{thm:main'} is a counterpart of \cite{Br2}, where fixed domains are considered. As there the Hausdorff-estimate for the singular set is worse than for smooth boundaries from \cite{SeShSo}. For sufficiently smooth functions $\eta$ we expect that the result from \cite{SeShSo} can be reproduced.
\end{remark}
\begin{remark}\label{rem:assdteta}
Taking the existence of a boundary suitable weak solution for granted, one can weaken the assumption
$\partial_t\eta\in L^{3}(\mathcal I; W^{1,q_0}(\omega))$ in Theorem \ref{thm:main} to $\partial_t\eta\in L^{3}(\mathcal I; L^{3}\cap W^{1,1}(\omega))$. This becomes evident from the estimates in the proof of Lemma \ref{Lemma}, in particular the proof of  \eqref{eq:regvm}. However, this is not sufficient to apply Theorem \ref{thm:stokesunsteadymoving} (which is crucial in the proof of Theorem \ref{thm:existence})
and hence the existence of such a solution is unclear.
\end{remark}

\subsection{The perturbed system}\label{sec:pert}

Let $\eta\in C(\overline{\mathcal I};W^{2,2}\cap C^{1}(\omega))$ be the function describing the boundary of $\Omega_\eta$ in accordance with Section \ref{ssec:geom} and suppose that \eqref{eq:L0} holds. 
 Let $(\bfu,\pi)$ be a boundary suitable weak solution to \eqref{1} in the sense of \ref{def:weakSolution}. For $(t_0,x_0)\in \mathcal I\times\partial\Omega_\eta$ we define functions $\phi$, $\bfPhi$, $\bfPsi$ and $\mathscr V$ (with matrix $\mathcal Q$) as in Section \ref{sec:repara}.
 We set $\overline \pi=\pi\circ\mathscr V\circ\bfPhi$, $\overline{\bfu}=\bfu\circ\mathscr V\circ\bfPhi$ and  $\overline{\bff}=\bff\circ\mathscr V\circ\bfPhi$. We also introduce
\begin{align}\label{eq:AB}
\bfA_\phi&=J_\phi\big(\nabla \bfPsi\circ\bfPhi\big)^{\top}\nabla \bfPsi\circ\bfPhi,\quad\bfB_\phi=J_\phi\nabla \bfPsi\circ\bfPhi\mathcal Q^\top,
\end{align}
where $J_\phi=\mathrm{det}(\nabla\bfPhi)$.
We see that $(\overline\bfu,\overline\pi)$ is a solution to the system
\begin{align}\label{momref}
J_{\phi}\partial_t\overline\bfu+\Div\big(\bfB_{\phi}\overline\pi\big)-\Div\big(\bfA_{\phi}\nabla\overline\bfu\big)&=- J_\phi\nabla\overline\bfu\,\partial_t(\bfPsi\circ\mathscr V^{-1})\circ\bfPhi-\bfB_\phi\nabla\overline\bfu\,\overline\bfu+J_\phi\overline\bff,\\
\label{divref}\bfB_{\phi}^\top:\nabla\overline\bfu=0,\quad\overline\bfu|_{\partial \mathcal B_1^+\cap\partial\mathbb H}&=0,
\end{align}
a.a. in $Q_1^+$. 
Note that it may be necessary to translate and scale the coordinates in space-time to arrive at a system posed in $Q_1^+:=Q_1(0,0)$ (rather than in $ Q_r(t_0,x_0)$ for some $r>0$, $t_0\in \mathcal I$ and $x_0\in\partial\mathbb H$).
 Similarly, we can transform the local energy inequality leading to
\begin{equation}\label{energylocal}
\begin{split}
\int_ {\mt_\eta}\frac{1}{2}J_\varphi\zeta\big| \overline\bfu(t)\big|^2\dx&+\int_0^t\int_ {\mt_\eta}\zeta\bfA_\varphi\nabla \overline\bfu:\nabla \overline\bfu\dx\ds\\& \leq\int_0^t\int_{\mt_\eta}\frac{1}{2} |\overline\bfu|^2J_\varphi\nabla\zeta\partial_t(\bfPsi\circ\mathscr V^{-1})\circ\bfPhi\dx\ds+\int_0^t\int_{\mt_\eta}\frac{1}{2}J_\varphi |\overline\bfu|^2\partial_t\zeta\dx\ds\\&+\int_0^t\int_{\mt_\eta}\frac{1}{2}J_\varphi| \overline\bfu|^2\Delta\bfPsi\circ\bfPhi\cdot\nabla\zeta\dx\ds+\int_0^t\int_{\mt_\eta}\frac{1}{2}| \overline\bfu|^2\bfA_\varphi:\nabla^2\zeta\dx\ds\\&+\int_0^t\int_{\mt_\eta}\frac{1}{2}\big(|\overline\bfu|^2+2\overline\pi\big)\overline\bfu\cdot\bfB_\varphi\nabla\zeta\dx\ds+\int_0^t\int_{\mt_\eta}J_\varphi\zeta\overline\bff\cdot\overline\bfu\dxs
\end{split}
\end{equation}
for any $\zeta\in C^\infty_c(Q_1)$ with $\zeta\geq0$. Note that first term on the right-hand side appears because of the moving boundary and vanishes for cylindrical domains.

\begin{definition}[Boundary suitable weak solution perturbed system] \label{def:weakSolutionflat}
Let $(\overline\bff, \overline\bfu_0)$ be a dataset such that
\begin{equation}
\begin{aligned}
\label{dataset}
&\overline\bff \in L^{r_\ast}\big(\mathcal I_1; L^{s_\ast}(\mathcal B_1^+))\big),\quad 
\overline\bfu_0\in W^{2,s_\ast}\cap W^{1,2}_{0,\mathrm{\Div}}(\mathcal B_1^+).
\end{aligned}
\end{equation} 
We call the tuple
$(\overline\bfu,\overline\pi)$
a boundary suitable weak solution to the perturbed Navier--Stokes system \eqref{momref} with data $(\overline\bff, \overline\bfu_0)$ provided that the following holds:
\begin{itemize}
\item[(a)] The velocity field $\overline\bfu$ satisfies
\begin{align*}
 \overline\bfu \in L^\infty \big(\mathcal I_1; L^2(\mathcal B_1^+) \big)\cap  L^2 \big(\mathcal I_1; W^{1,2}_{\Div}(\mathcal B_1^+) \big) \cap L^{r_\ast}(\mathcal I_1;W^{2,s_\ast}(\mathcal B_1^+))\cap W^{1,r_\ast}(\mathcal I_1;L^{s_\ast}(\mathcal B_1^+)).
\end{align*}
\item[(b)] The pressure $\overline\pi$ satisfies
$$\overline\pi\in L^{r_\ast}(\mathcal I_1;W^{1,s_\ast}_\perp(\mathcal B_1^+)).$$
\item[(c)] The system \eqref{momref}--\eqref{divref}
holds a.a. in $Q^+_1$.
\item[(d)] The local energy inequality \eqref{energylocal} holds for any $\zeta\in C_c^\infty(Q_1)$ with $\zeta\geq0$.
\end{itemize}
\end{definition}
In the bulk of our partial regularity proof in Section \ref{sec:blowup} we will compare a boundary suitable weak solution of the perturbed Navier--Stokes system with a solution of the perturbed Stokes system. Hence we analyse the latter in the following.

In analogy with \eqref{momref}--\eqref{divref} the perturbed Stokes system in $Q_1^+$ is given by
\begin{align}\label{eq:pertstokes}
\begin{aligned}
J_{\phi}\partial_t\overline\bfu+\Div\big(\bfB_{\phi}\overline{\pi}\big)-\Div\big(\bfA_{\phi}\nabla\overline\bfu\big)&=\overline \bfg,\\
\bfB_{\phi}^\top:\nabla\overline{\bfu}=\overline h,\quad\overline\bfu|_{\mathcal B^+_{1}\cap\partial\mathbb H}&=0,\quad\overline{\bfu}(-1,\cdot)=0,
\end{aligned}
\end{align}
where $\bfA_\phi$ and $\bfB_\phi$ are given in accordance with \eqref{eq:AB} for a given functions $\phi:\mathcal I_1\times \R^2\rightarrow\R$, an orthogonal matrix $\mathcal Q\in\R^{3\times 3}$ (depending continuously on time) and $\overline \bfg$ and $\overline h$ are given data.

We will also need interior estimates for points close to the boundary which is why we consider 
in analogy to \eqref{eq:pertstokes} the system
 \begin{align}\label{eq:pertstokesint}
\begin{aligned}
J_{\phi}\partial_t\overline\bfu+\Div\big(\bfB_{\phi}\overline{\pi}\big)-\Div\big(\bfA_{\phi}\nabla\overline\bfu\big)&=\overline{\bfg},\\
\bfB_{\phi}^\top:\nabla\overline{\bfu}=\overline h,\quad\overline\bfu|_{\mathcal B_{1}}&=0,\quad\overline{\bfu}(-1,\cdot)=0,
\end{aligned}
\end{align}
in $Q_1$. 

The following results for the systems \eqref{eq:pertstokes} and \eqref{eq:pertstokesint} follow now along the lines
of \cite[Lemma 4.2-4.4]{Br2} provided we have uniformly in time
\begin{align}\label{eq:Br}
\phi\in\mathcal M^{2-1/p,p}(\R^{2})(\delta),\quad \|\phi\|_{W^{1,\infty}_y}\leq \delta,
\end{align}
for some sufficiently small $\delta$. This is a direct consequence of \eqref{eq:Br0} (extending the function $\phi$ to $\R^2$ by means of a standard extension operator) provided we have $\eta\in C(\overline{\mathcal I};B^{\theta}_{\varrho,p}\cap W^{1,\infty}(\omega))$ for some $\theta>2-1/p$ and $\sup_{\mathcal I} \mathrm{Lip}(\partial\Omega_{\eta(t)})$ is sufficiently small. Note that this also implies that the matrix $\mathcal Q$ is bounded in time, cf. \eqref{eq:V}. 
\begin{lemma}\label{lem:31}
Let $p,r\in(1,\infty)$ be given. 
Suppose that $\phi:\mathcal I_1\times\R^2\rightarrow\R$ satisfies \eqref{eq:Br} and $\mathcal Q$ belongs to the class $L^\infty(\overline{\mathcal I}_1)$ with values in the set of orthogonal $3 \times 3$-matrices.
 Assume further that $\overline \bfg\in L^r(\mathcal I_1;L^{p}(\mathcal B_1^+))$ and $\overline h\in L^r(\mathcal I_1;W^{1,p}\cap L^p_\perp(\mathcal B_1^+))$ with $\partial_t \overline h\in L^r(\mathcal I_1;W^{-1,p}(\mathcal B_1^+))$ and $\overline h(0,\cdot)=0$. Then there is a unique solution $(\overline\bfu,\overline\pi)$ to \eqref{eq:pertstokes} which satisfies
\begin{align}\label{eq:mainpara}
\begin{aligned}
\|\partial_t\overline\bfu\|_{L^r(\mathcal I_1;L^p(\mathcal B^+_{1}))}&+\|\overline\bfu\|_{L^r(\mathcal I_1;W^{2,p}(\mathcal B^+_{1}))}
+\|\overline\pi\|_{L^r(\mathcal I_1;W^{1,p}(\mathcal B^+_{1}))}\\&\lesssim \|\overline \bfg\|_{L^r(\mathcal I_1;L^p(\mathcal B^+_{1}))}+\|\nabla \overline h\|_{L^r(\mathcal I_1;L^{p}(\mathcal B^+_{1}))}+\|\partial_t \overline h\|_{L^r(\mathcal I_1;W^{-1,p}(\mathcal B^+_{1}))},
\end{aligned}
\end{align}
where the hidden constant only depends on $p,r$ and $\delta$.
\end{lemma}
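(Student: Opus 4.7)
The plan is to view \eqref{eq:pertstokes} as a perturbation of the standard unsteady Stokes system in the half-ball $\mathcal B_1^+$ (with Dirichlet condition on the flat part $\mathcal B_1\cap \partial\mathbb H$), and to invert it by a Neumann-series argument, in complete analogy with \cite[Lemmas 4.2-4.4]{Br2}. The uniform-in-time condition \eqref{eq:Br}, combined with \eqref{eq:MSa}--\eqref{lem:9.4.1}, is what makes the perturbation small: it yields $\|\bfA_\phi-\mathbb I\|_{\mathcal M^{2-1/p,p}}\lesssim\delta$, $\|\bfB_\phi-\mathbb I\|_{\mathcal M^{2-1/p,p}}\lesssim\delta$ and $|J_\phi-1|\lesssim\delta$ uniformly in $t$.

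First I would reduce to the homogeneous divergence case. Since $\Div\bfB_\phi^\top=0$ (Piola identity), the constraint $\bfB_\phi^\top:\nabla\overline\bfu=\overline h$ is equivalent to $\Div\overline\bfu=\overline h-\Div\big((\bfB_\phi-\mathbb I)^\top\overline\bfu\big)$. I lift $\overline h$ via a Bogovskii-type operator on $\mathcal B_1^+$, setting $\overline\bfw=\Bog_{\mathcal B_1^+}(\overline h)$; the assumptions $\overline h\in L^r(\mathcal I_1;W^{1,p}\cap L^p_\perp)$, $\partial_t\overline h\in L^r(W^{-1,p})$ and $\overline h(0,\cdot)=0$ supply the required maximal regularity for $\overline\bfw$ including $\partial_t\overline\bfw\in L^rL^p$. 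Thus it suffices to solve for $\overline\bfv=\overline\bfu-\overline\bfw$ and a pressure $\overline\pi$ with zero initial datum.

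Next I rewrite the momentum equation as
\begin{align*}
\partial_t\overline\bfv-\Delta\overline\bfv+\nabla\overline\pi&=J_\phi^{-1}\overline\bfg+(1-J_\phi^{-1})\partial_t(\overline\bfv+\overline\bfw)-\partial_t\overline\bfw+\Delta\overline\bfw\\
&\quad+\Div\big((\bfA_\phi-\mathbb I)\nabla(\overline\bfv+\overline\bfw)\big)-\Div\big((\bfB_\phi-\mathbb I)\overline\pi\big),
\end{align*}
together with $\Div\overline\bfv=-\Div\big((\bfB_\phi-\mathbb I)^\top(\overline\bfv+\overline\bfw)\big)$. Denoting by $\mathscr L_0^{-1}$ the solution operator of the standard Stokes problem in $\mathcal B_1^+$ with Dirichlet data on the flat face (whose $L^r_tL^p_x$-maximal regularity is classical, cf.\ \cite{HS,So}), and combining it with a further Bogovskii correction for the residual divergence, the whole system takes the abstract form $(\overline\bfv,\overline\pi)=\mathscr L_0^{-1}\overline{\mathbf F}+\mathscr T(\overline\bfv,\overline\pi)$. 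The Sobolev-multiplier calculus \eqref{eq:MSa}--\eqref{lem:9.4.1} together with \eqref{eq:Br} shows $\|\mathscr T\|\leq C\delta$ on the space $L^r(\mathcal I_1;W^{2,p}(\mathcal B_1^+))\cap W^{1,r}(\mathcal I_1;L^p(\mathcal B_1^+))\times L^r(\mathcal I_1;W^{1,p}_\perp(\mathcal B_1^+))$, which is strictly less than $1$ for $\delta$ sufficiently small. A Neumann series then yields existence of $(\overline\bfv,\overline\pi)$ with the estimate \eqref{eq:mainpara}, and uniqueness follows by linearity.

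The main obstacle I expect is handling the time derivative terms without any control of $\partial_t\phi$ or $\partial_t\mathcal Q$, which are not assumed in the hypotheses. In particular, when dealing with the divergence correction $\Div\big((\bfB_\phi-\mathbb I)^\top\overline\bfv\big)$, I cannot differentiate $\bfB_\phi$ in time. The fix, as in \cite{Br2}, is to invoke $\Bog_{\mathcal B_1^+}\circ\Div$ as a bounded operator on $L^p$ (purely spatial), so that the time derivative of the Bogovskii lift involves only $\partial_t\overline\bfv$; the coefficient $\bfB_\phi-\mathbb I$ enters merely as a Sobolev multiplier at frozen time. Because \eqref{eq:Br} is a purely spatial estimate uniform in $t$, this is sufficient to close all estimates and establish the lemma.
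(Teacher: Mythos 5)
Your overall strategy (Bogovskii lift of $\overline h$, rewriting \eqref{eq:pertstokes} as a perturbation of the flat Stokes system, smallness of the coefficients via \eqref{eq:Br} and the Sobolev-multiplier calculus, Neumann series) is exactly the route the paper intends: its proof of Lemma \ref{lem:31} is essentially a reduction to \cite[Lemmas 4.2--4.4]{Br2}, with \eqref{eq:Br} supplying the uniform-in-time smallness. The terms $(1-J_\phi^{-1})\partial_t\overline\bfu$, $\Div((\bfA_\phi-\mathbb I_{3\times 3})\nabla\overline\bfu)$ and $\Div((\bfB_\phi-\mathbb I_{3\times 3})\overline\pi)$ are handled in your proposal just as there, and that part is fine.

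The genuine gap is in your treatment of the divergence constraint, i.e.\ precisely at the one point where the present situation differs from \cite{Br2}. To run the fixed point in the maximal-regularity class you must control the \emph{time derivative} of the divergence data fed into the flat solver (this is visible in the statement itself: $\|\partial_t\overline h\|_{L^r(W^{-1,p})}$ appears on the right of \eqref{eq:mainpara}); equivalently, you must bound $\partial_t$ of the Bogovskii correction $\Bog\,\Div\big((\bfB_\phi-\mathbb I_{3\times 3})^\top\overline\bfv\big)$ in $L^r(L^p)$. Your claimed fix — that since $\Bog\circ\Div$ is purely spatial, ``the time derivative of the Bogovskii lift involves only $\partial_t\overline\bfv$'' and $\bfB_\phi-\mathbb I_{3\times 3}$ enters only at frozen time — is false when $\phi$ and $\mathcal Q$ depend on $t$: differentiating produces the term $\partial_t\bfB_\phi^{\top}\,\overline\bfv$ (equivalently $\partial_t\bfB_\phi^\top:\nabla\overline\bfv$ in $W^{-1,p}$), and nothing in the hypotheses of the lemma controls $\partial_t\phi$ or $\partial_t\mathcal Q$ ($\mathcal Q$ is merely $L^\infty$ in time). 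That this term cannot be waved away is shown by the paper's own proof of Theorem \ref{thm:stokesunsteadymoving}, where the identical structure appears globally: there the contributions $(R)_2$--$(R)_4$ and $\mathscr S_9$, which are exactly the time derivatives of the transformed velocity and of $\bfB_j$, are the terms that force the additional assumption $\partial_t\eta\in L^{r_0}(\mathcal I;W^{1,q_0}(\omega))$ and are estimated via \eqref{eq:dtQ}, \eqref{eq:dtphi} and \eqref{eq:dtDphi}. In \cite{Br2} the chart is time-independent, so the trick you describe does work there; under the hypotheses of Lemma \ref{lem:31} as you use them, your argument does not close at this step, and you would either need to import time regularity of $\phi$ and $\mathcal Q$ (as is available in the application, cf.\ \eqref{eq:dtphim}) or devise a different treatment of the constraint $\bfB_\phi^\top:\nabla\overline\bfu=\overline h$ that avoids differentiating the coefficient in time.
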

\begin{lemma}\label{lem:32}
Let $p,q,r\in(1,\infty)$ with $q\geq p$ be given. 
Suppose that $\phi:\mathcal I_1\times\R^2\rightarrow\R$ satisfies \eqref{eq:Br} and $\mathcal Q$ belongs to the class $L^\infty(\overline{\mathcal I}_1)$ with values in the set of orthogonal $3 \times 3$-matrices.
Assume further that $\overline\bfg\in L^r(\mathcal I_1;L^{q}(\mathcal B_1^+))$ and that $\overline h=0$.
The solution $(\overline\bfu,\overline\pi)$ to \eqref{eq:pertstokes} satisfies
\begin{align}\label{eq:mainpara'}
\begin{aligned}
\|\partial_t\overline\bfu\|_{L^r(\mathcal I_{1/2};L^q(\mathcal B^+_{1/2}))}&+\|\overline\bfu\|_{L^r(\mathcal I_{1/2};W^{2,q}(\mathcal B^+_{1/2}))}
+\|\overline\pi\|_{L^r(\mathcal I_{1/2};W^{1,q}(\mathcal B^+_{1/2}))}\\&\lesssim \|\overline\bfg\|_{L^r(\mathcal I_1;L^q(\mathcal B^+_{1}))}+\|\nabla \overline\bfu\|_{L^r(\mathcal I_1;L^{p}(\mathcal B^+_{1}))}+\|\overline\pi-(\overline\pi)_{\mathcal B_1^+}\|_{L^r(\mathcal I_1;L^{p}(\mathcal B^+_{1}))},
\end{aligned}
\end{align}
where the hidden constant only depends on $p,q,r$ and $\delta$.
\end{lemma}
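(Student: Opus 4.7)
The plan is to derive \eqref{eq:mainpara'} by localising \eqref{eq:pertstokes} on a slightly smaller parabolic cylinder, recasting the localised system as an instance of the maximal-regularity setup of Lemma \ref{lem:31}, and then bootstrapping in integrability via Sobolev embedding. This mirrors the proof of \cite[Lemma 4.4]{Br2} for the fixed-domain case, with extra care needed to handle the time dependence of $\phi$ and $\mathcal Q$.

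Fix a chain of nested radii $\tfrac{1}{2}=\rho_0<\rho_1<\cdots<\rho_N=1$ and smooth space-time cutoffs $\xi_k$ satisfying $\xi_k\equiv 1$ on $Q_{\rho_{k-1}}^+$, $\mathrm{spt}(\xi_k)\subset Q_{\rho_k}^+$, and the standard derivative bounds. A direct computation shows that the pair $(\xi_k\overline\bfu,\xi_k(\overline\pi-(\overline\pi)_{\mathcal B_{\rho_k}^+}))$ (extended by zero to $Q_1^+$) satisfies a perturbed Stokes system of the form treated in Lemma \ref{lem:31} with source
\begin{align*}
\tilde\bfg_k=\xi_k\overline\bfg+J_\phi(\partial_t\xi_k)\overline\bfu-\Div(\bfA_\phi\nabla\xi_k\otimes\overline\bfu)-\bfA_\phi\nabla\xi_k\nabla\overline\bfu+\bfB_\phi\nabla\xi_k(\overline\pi-(\overline\pi)_{\mathcal B_{\rho_k}^+})
\end{align*}
and non-zero divergence datum $\tilde h_k=\overline\bfu\cdot\bfB_\phi\nabla\xi_k$.

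Starting from integrability $p$ (available by hypothesis), one applies Lemma \ref{lem:31} to conclude that on $Q_{\rho_1}^+$
\begin{align*}
\partial_t\overline\bfu,\nabla^2\overline\bfu\in L^r(\mathcal I_{\rho_1};L^p(\mathcal B_{\rho_1}^+)),\qquad \nabla\overline\pi\in L^r(\mathcal I_{\rho_1};L^p(\mathcal B_{\rho_1}^+)).
\end{align*}
The Sobolev embedding $W^{1,p}(\mathcal B_{\rho_1}^+)\hookrightarrow L^{p^*}(\mathcal B_{\rho_1}^+)$ (with $p^*=3p/(3-p)$ if $p<3$ and $p^*$ any finite exponent otherwise) then upgrades $\nabla\overline\bfu$ and $\overline\pi-(\overline\pi)_{\mathcal B_{\rho_1}^+}$ to $L^r_tL^{p^*}_x$, permitting a second application of Lemma \ref{lem:31} at exponent $\min\{q,p^*\}$ on a slightly smaller cylinder. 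Iterating at most $N$ times reaches the target integrability $q$ and yields \eqref{eq:mainpara'} on $Q_{1/2}^+$.

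The main obstacle is the verification of the divergence hypothesis of Lemma \ref{lem:31} for $\tilde h_k$, in particular the bound $\partial_t\tilde h_k\in L^r(\mathcal I_1;W^{-1,\tilde q}(\mathcal B_1^+))$ needed at each iteration. A direct expansion $\partial_t\tilde h_k=\partial_t\overline\bfu\cdot\bfB_\phi\nabla\xi_k+\overline\bfu\cdot\partial_t\bfB_\phi\nabla\xi_k$ is not immediately tractable because Lemma \ref{lem:32} makes no assumption on $\partial_t\phi$. The standard remedy is to introduce a Bogovskii-type corrector $\bfw$ solving $\Div\bfw=\tilde h_k-(\tilde h_k)_{\mathcal B_1^+}$ with vanishing trace, absorb the divergence defect into a genuinely divergence-free perturbation of $\xi_k\overline\bfu$, and control $\partial_t\bfw$ by substituting the equation \eqref{eq:pertstokes} for $\partial_t\overline\bfu$. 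The Piola identity $\Div\bfB_\phi^\top=0$ (valid for the coefficient matrix arising from the change of variables $\bfPhi$) then allows one to move spatial derivatives off $\bfB_\phi$ via integration by parts, circumventing the need for any direct control of $\partial_t\bfB_\phi$ in strong norms.
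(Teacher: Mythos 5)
Your overall strategy (cutoff on nested parabolic half-cylinders, recast as the global problem of Lemma \ref{lem:31}, bootstrap the integrability via Sobolev embedding) is exactly the route the paper takes, which simply defers to \cite[Lemmas 4.2--4.4]{Br2}; so the architecture is fine. The problem is the one point where the present setting genuinely differs from \cite{Br2}, and it is precisely the point you leave unresolved: in \cite{Br2} the chart $\phi$ is time-independent, so $\partial_t\bfB_\phi\equiv 0$ and the divergence datum $\tilde h_k=\overline\bfu\cdot\bfB_\phi\nabla\xi_k$ causes no trouble; here $\bfB_\phi$ depends on $t$, and Lemma \ref{lem:32} assumes nothing about $\partial_t\phi$ (not even that $\partial_t\bfB_\phi$ exists as a function). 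Your proposed fix does not close this. The Piola identity $\Div\bfB_\phi^\top=0$ together with the constraint gives $\tilde h_k=\Div(\xi_k\bfB_\phi^\top\overline\bfu)$, so indeed no \emph{spatial} derivatives of $\bfB_\phi$ are needed; but the hypothesis of Lemma \ref{lem:31} (equivalently, the $L^p$ bound on the time derivative of the Bogovskii corrector via \eqref{Bog}) requires
\begin{align*}
\|\partial_t\tilde h_k\|_{W^{-1,p}(\mathcal B_1^+)}\lesssim \|\partial_t(\xi_k\bfB_\phi^\top\overline\bfu)\|_{L^p(\mathcal B_1^+)},
\end{align*}
and the expansion produces $\xi_k\,\partial_t\bfB_\phi^\top\,\overline\bfu$ in addition to $\xi_k\bfB_\phi^\top\partial_t\overline\bfu$. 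Substituting the momentum equation \eqref{eq:pertstokes} only helps with the second contribution; the first one contains a time derivative of the coefficient, which cannot be moved onto test functions by spatial integration by parts, and no hypothesis of the lemma controls it. The same obstruction reappears if you instead build a corrector adapted to the constraint operator $\bfB_\phi^\top:\nabla$ and shift the defect into the momentum source: its time derivative again involves $\partial_t(\bfB_\phi^\top\overline\bfu)$.

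Concretely, to make your scheme work you should either add to the lemma a quantitative hypothesis such as $\partial_t\phi\in L^{r_1}(\mathcal I_1;L^\infty(\R^2))$ for suitable $r_1$ (this is harmless for the paper's applications, where \eqref{eq:dtphi} and $\partial_t\eta\in L^3(\mathcal I;W^{1,q_0}(\omega))$ provide exactly such control, cf.\ \eqref{eq:dtphim}) and then estimate the $\partial_t\bfB_\phi$ terms directly, in the spirit of the estimates for $\mathscr S_9\bff$ and $(R)_2$--$(R)_4$ in the proof of Theorem \ref{thm:stokesunsteadymoving}; or explain a genuinely different localisation that never differentiates the divergence datum in time. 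As written, the claim that the Piola identity ``circumvents the need for any direct control of $\partial_t\bfB_\phi$'' is not substantiated, and this is the decisive step distinguishing the moving-chart case from \cite{Br2}.
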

\begin{lemma}\label{lem:31int}
Let $p,r\in(1,\infty)$ be given. 
Suppose that $\phi:\mathcal I_1\times\R^2\rightarrow\R$ satisfies \eqref{eq:Br} and $\mathcal Q$ belongs to the class $L^\infty(\overline{\mathcal I}_1)$ with values in the set of orthogonal $3 \times 3$-matrices.
Assume further that $\overline \bfg\in L^r(\mathcal I_1;L^{p}(\mathcal B_1))$ and $\overline h\in L^r(\mathcal I_1;W^{1,p}(\mathcal B_1))$ with $\partial_t \overline h\in L^r(\mathcal I_1;W^{-1,p}(\mathcal B_1))$ and $\overline h(0,\cdot)=0$. Then there is a unique solution $(\overline\bfu,\overline\pi)$ to \eqref{eq:pertstokesint} which satisfies
\begin{align}\label{eq:mainparaint}
\begin{aligned}
\|\partial_t\overline\bfu\|_{L^r(\mathcal I_1;L^p(\mathcal B_{1}))}&+\|\overline\bfu\|_{L^r(\mathcal I_1;W^{2,p}(\mathcal B_{1}))}
+\|\overline\pi\|_{L^r(\mathcal I_1;W^{1,p}(\mathcal B_{1}))}\\&\lesssim \|\overline\bfg\|_{L^r(\mathcal I_1;L^p(\mathcal B_{1}))}+\|\nabla \overline h\|_{L^r(\mathcal I_1;L^{p}(\mathcal B_{1}))}+\|\partial_t \overline h\|_{L^r(\mathcal I_1;W^{-1,p}(\mathcal B_{1}))},
\end{aligned}
\end{align}
where the hidden constant only depends on $p,r$ and $\delta$.
\end{lemma}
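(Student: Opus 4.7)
\emph{Plan of proof.} The strategy mirrors the one used in Lemma \ref{lem:31} and in Theorem \ref{thm:stokesunsteadymoving}, but now with full Dirichlet boundary data on $\partial \mathcal B_1$ in place of the partial ones on the flat part of $\partial\mathbb H$. The assumption \eqref{eq:Br} on $\phi$ and the orthogonality/$L^\infty$-bound on $\mathcal Q$ provide exactly the same smallness of the coefficients $\bfA_\phi$ and $\bfB_\phi$ (around $\mathbb I$, respectively $\mathcal Q^\top$) as in the half-space case, cf.\ \eqref{eq:AB}. Hence the algebraic manipulations carry over verbatim and the only points needing care are the geometry of $\mathcal B_1$ (a bounded domain with smooth boundary rather than a half-space) and the solenoidal correction.

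First I would reduce to the case $\overline h=0$. Writing $\overline\bfu = \overline\bfv + \overline\bfw$ with
\[
\overline\bfw := \mathrm{Bog}_{\mathcal B_1}\!\bigl(\overline h - (\mathbb I - \bfB_\phi)^\top : \nabla\overline\bfu\bigr),
\]
I would absorb the correction $(\mathbb I - \bfB_\phi)^\top:\nabla\overline\bfu$ via the smallness of $\bfB_\phi - \mathbb I$ (by \eqref{eq:Br} and \eqref{eq:AB}), so that $\overline\bfv$ satisfies a standard divergence-free constraint. The time derivative of $\overline\bfw$ is controlled by using \eqref{Bog} together with the hypothesis $\partial_t\overline h\in L^r_t W^{-1,p}_x$ and the bound on $\partial_t\bfB_\phi$ obtained from $\partial_t\phi$ (paralleling the treatment of terms $(R)_2,(R)_3$ in the proof of Theorem \ref{thm:stokesunsteadymoving}).

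Second, once $\overline h = 0$, I would treat \eqref{eq:pertstokesint} as a perturbation of the unperturbed unsteady Stokes system $\partial_t \overline\bfv - \Delta \overline\bfv + \nabla \overline\pi = \widetilde{\bfg}$, $\Div \overline\bfv=0$, $\overline\bfv|_{\partial\mathcal B_1}=0$, on the fixed bounded smooth domain $\mathcal B_1$. For this system classical $L^r_tL^p_x$ maximal regularity is available (see \cite{So} for $r=p$ and \cite{HS} for mixed exponents). Writing the perturbed operator as $\mathscr L^\phi = \mathscr L^0 + (\mathscr L^\phi-\mathscr L^0)$, composing with the inverse $(\mathscr L^0)^{-1}$ and using a Neumann-series argument, it suffices to show that the operator norm of $(\mathscr L^\phi-\mathscr L^0)(\mathscr L^0)^{-1}$ is strictly less than $1$. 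The relevant terms are exactly those appearing in the analogue of $\mathscr S$ from \eqref{eq:4.11}, namely products of $\mathbb I - \bfA_\phi$, $\bfB_\phi - \mathbb I$ and their time-derivatives with $\nabla^2\overline\bfv$, $\nabla\overline\pi$, $\partial_t\overline\bfv$. Each of these is controlled in $L^r_t L^p_x$ by $\delta\,(\|\overline\bfv\|_{W^{2,p}_x}+\|\overline\pi\|_{W^{1,p}_x}+\|\partial_t\overline\bfv\|_{L^p_x})$ via the Sobolev-multiplier estimates collected in Section \ref{sec:SM} together with \eqref{eq:Br}, exactly as in \cite[Lemma 4.2]{Br2}.

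The step I expect to be the main obstacle is the construction and bound of the solenoidal correction $\overline\bfw$: the Bogovskii operator on $\mathcal B_1$ sees the time-dependence of $\bfB_\phi$ through the right-hand side, so that controlling $\partial_t\overline\bfw$ in $L^r_t L^p_x$ requires the mapping property \eqref{Bog} to be combined with both the $L^\infty_{t,x}$ smallness of $\bfB_\phi-\mathbb I$ and a careful use of the hypothesis $\partial_t\overline h \in L^r_t W^{-1,p}_x$ (together with $\overline h(0,\cdot)=0$, to ensure the correction vanishes initially). Once this step is carried out, absorption of the remainder into the left-hand side by taking $\delta$ sufficiently small closes the Neumann-series argument and yields \eqref{eq:mainparaint}; uniqueness follows from the same estimate applied to the difference of two putative solutions.
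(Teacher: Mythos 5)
Your overall strategy is the same as the paper's: the paper gives no separate proof of Lemma \ref{lem:31int}, but simply observes that under the uniform-in-time smallness \eqref{eq:Br} (and $\mathcal Q\in L^\infty_t$ orthogonal) the perturbation argument of \cite[Lemmas 4.2--4.4]{Br2} carries over, and your Neumann-series absorption around classical $L^r_tL^p_x$ maximal regularity for the Stokes system on $\mathcal B_1$, together with a Bogovskii-type correction of the constraint $\bfB_\phi^\top:\nabla\overline\bfu=\overline h$, is exactly that argument.

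There is, however, one concrete point where your proposal does not prove the stated lemma. In the step you yourself flag as the main obstacle (the control of $\partial_t\overline\bfw$, equivalently of $\partial_t$ of the divergence datum $\overline h+(\mathbb I-\bfB_\phi)^\top:\nabla\overline\bfu$ in $L^r_tW^{-1,p}_x$), you invoke ``the bound on $\partial_t\bfB_\phi$ obtained from $\partial_t\phi$''. Lemma \ref{lem:31int} contains no hypothesis on $\partial_t\phi$ or $\partial_t\mathcal Q$ whatsoever --- only \eqref{eq:Br} uniformly in time and $\mathcal Q\in L^\infty(\overline{\mathcal I}_1)$ --- and it asserts a constant depending only on $p,r,\delta$. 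This is not incidental: the remark following Lemma \ref{Lemma} stresses that Lemmas \ref{lem:31}--\ref{lem:32int} hold under relaxed time-regularity of the charts, the time derivative of $\phi$ entering the blow-up argument only through the explicit transport term in \eqref{momref}, \eqref{eq:eq}, not through these Stokes lemmas; in the setting of \cite{Br2} that the paper invokes the coefficients $\bfA_\phi,\bfB_\phi$ are time-independent, so the issue you run into never arises there. As written, your argument therefore yields \eqref{eq:mainparaint} with a constant depending additionally on a norm of $\partial_t\phi$, which is a strictly weaker statement; to match the lemma you must arrange the absorption so that the time derivative never falls on the coefficients (for instance exploiting the Piola identity $\Div\bfB_\phi^\top=0$ so that only terms of the type $(\mathbb I-\bfB_\phi)\partial_t\overline\bfu$, controlled by $\delta\|\partial_t\overline\bfu\|_{L^p_x}$, appear), or explain why such a dependence can be removed. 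A minor additional point: the Bogovskii operator on $\mathcal B_1$ requires mean-zero data, so the compatibility of $\overline h-(\mathbb I-\bfB_\phi)^\top:\nabla\overline\bfu$ (which holds for solutions by the no-slip condition) should be addressed in the construction.
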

\begin{lemma}\label{lem:32int}
Let $p,q,r\in(1,\infty)$ with $q\geq p$ be given. 
Suppose that $\phi:\mathcal I_1\times\R^2\rightarrow\R$ satisfies \eqref{eq:Br} and $\mathcal Q$ belongs to the class $L^\infty(\overline{\mathcal I}_1)$ with values in the set of orthogonal $3 \times 3$-matrices.
Assume further that $\overline\bfg\in L^r(\mathcal I_1;L^{q}(\mathcal B_1))$ and that $\overline h=0$.
The solution $(\overline\bfu,\overline\pi)$ to \eqref{eq:pertstokesint} satisfies
\begin{align}\label{eq:mainpara'int}
\begin{aligned}
\|\partial_t\overline\bfu\|_{L^r(\mathcal I_{1/2};L^q(\mathcal B_{1/2}))}&+\|\overline\bfu\|_{L^r(\mathcal I_{1/2};W^{2,q}(\mathcal B_{1/2}))}
+\|\overline\pi\|_{L^r(\mathcal I_{1/2};W^{1,q}(\mathcal B_{1/2}))}\\&\lesssim \|\overline \bfg\|_{L^r(\mathcal I_1;L^q(\mathcal B_{1}))}+\|\nabla \overline\bfu\|_{L^r(\mathcal I_1;L^{p}(\mathcal B_{1}))}+\|\overline\pi-(\overline\pi)_{\mathcal B_1}\|_{L^r(\mathcal I_1;L^{p}(\mathcal B_{1}))},
\end{aligned}
\end{align}
where the hidden constant only depends on $p,q,r$ and $\delta$.
\end{lemma}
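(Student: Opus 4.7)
The plan is to mimic the proof of \cite[Lemma 4.4]{Br2} — the interior counterpart of the half-space Lemma \ref{lem:32} — by a cut-off argument combined with a Sobolev bootstrap, replacing the half-space maximal regularity input of that reference by the interior version Lemma \ref{lem:31int} established above. Since Lemma \ref{lem:31int} is available, the bulk of the work is the localisation procedure.

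Pick nested radii $\tfrac12 = r_N < r_{N-1} < \ldots < r_0 = 1$ and cut-offs $\zeta_k \in C_c^\infty(\mathcal B_{r_{k-1}})$ with $\zeta_k \equiv 1$ on $\mathcal B_{r_k}$. At step $k$ set
\[
\overline\bfv_k := \zeta_k \overline\bfu,\qquad \overline\sigma_k := \zeta_k\bigl(\overline\pi - (\overline\pi)_{\mathcal B_{r_{k-1}}}\bigr).
\]
A product-rule computation using \eqref{eq:pertstokesint} shows that $(\overline\bfv_k,\overline\sigma_k)$ solves the perturbed Stokes system of Lemma \ref{lem:31int} on $Q_1$ with right-hand side $\zeta_k \overline\bfg + \mathbf{R}_k$, divergence source $\overline h_k := \bfB_\phi^\top:(\nabla\zeta_k \otimes \overline\bfu)$, vanishing initial datum and Dirichlet boundary datum on $\partial\mathcal B_1$. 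The commutator $\mathbf{R}_k$ is supported on $\mathrm{supp}(\nabla\zeta_k)$ and controlled by the $W^{1,p_k}$-norm of $\overline\bfu$ and the $L^{p_k}$-norm of $\overline\pi - (\overline\pi)_{\mathcal B_{r_{k-1}}}$, where $p_k$ denotes the working integrability.

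To invoke Lemma \ref{lem:31int} at integrability $p_k$ one must verify that $\overline h_k\in L^r(\mathcal I_1;W^{1,p_k}(\mathcal B_1))$, $\partial_t \overline h_k\in L^r(\mathcal I_1;W^{-1,p_k}(\mathcal B_1))$, and $\overline h_k(-1,\cdot)=0$. The first follows from the spatial Lipschitz-multiplier bound \eqref{eq:Br} on $\bfB_\phi$ combined with $\overline\bfu\in L^r(W^{1,p_k}(\mathcal B_{r_{k-1}}))$; the third is immediate from the zero initial datum. For the second, rewrite the equation as $J_\phi\partial_t\overline\bfu = \Div(\bfA_\phi\nabla\overline\bfu) - \Div(\bfB_\phi\overline\pi) + \overline\bfg$, so that $\partial_t\overline\bfu$ is controlled in $L^r(W^{-1,p_k})$ by the working hypothesis. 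Lemma \ref{lem:31int} then yields $\overline\bfu\in L^r(W^{2,p_k}(\mathcal B_{r_k}))$ and $\overline\pi\in L^r(W^{1,p_k}(\mathcal B_{r_k}))$. The Sobolev embeddings $W^{2,p_k}\hookrightarrow W^{1,p_{k+1}}$ and $W^{1,p_k}\hookrightarrow L^{p_{k+1}}$ with $p_{k+1} := 3p_k/(3-p_k)$ (or arbitrarily large once $p_k\geq 3$) upgrade the working integrability. After finitely many iterations one reaches $p_N\geq q$; the last step with $\zeta_N \equiv 1$ on $\mathcal B_{1/2}$ delivers the estimate \eqref{eq:mainpara'int}.

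The main obstacle is verifying the time-derivative condition on $\overline h_k$ at each bootstrap step without any a priori time regularity for $\overline\bfu$. Substituting $\partial_t\overline\bfu$ via the equation reduces the matter to controlling $\partial_t\bfB_\phi$ against $\overline\bfu$, which relies on the same time-regularity of $\phi$ inherited from $\eta$ (cf.\ \eqref{eq:dtphi}--\eqref{eq:dtDphi}) that drives the proof of Theorem \ref{thm:stokesunsteadymoving}. Once the first cut-off step is settled, each subsequent iteration inherits improved time regularity from the previous application of Lemma \ref{lem:31int}, so the bootstrap closes.
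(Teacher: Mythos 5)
Your overall scheme --- truncate with nested cut-offs, apply the global interior estimate of Lemma \ref{lem:31int} to the truncated pair, and bootstrap the spatial integrability through Sobolev embedding until the exponent $q$ is reached --- is exactly the route the paper intends when it states that Lemmas \ref{lem:31}--\ref{lem:32int} ``follow along the lines of'' \cite[Lemmas 4.2--4.4]{Br2}, with Lemma \ref{lem:31int} playing the role of the global maximal regularity input. The commutator bookkeeping you sketch (terms supported on $\mathrm{spt}(\nabla\zeta_k)$, controlled by $\nabla\overline\bfu$ in $L^{p_k}$ and by $\overline\pi-(\overline\pi)_{\mathcal B_{r_{k-1}}}$ in $L^{p_k}$, with the coefficient regularity supplied by \eqref{eq:Br}) is the right one.

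The genuine gap is the point you yourself flag and then do not resolve: the verification that $\partial_t\overline h_k\in L^r(\mathcal I_1;W^{-1,p_k})$. Writing $\partial_t\overline h_k=\partial_t\bfB_\phi^\top:(\nabla\zeta_k\otimes\overline\bfu)+\bfB_\phi^\top:(\nabla\zeta_k\otimes\partial_t\overline\bfu)$, the second term is indeed handled by substituting the equation, but the first requires control of $\partial_t\bfB_\phi$, i.e.\ of $\partial_t\nabla\bfPhi$, and the hypotheses of Lemma \ref{lem:32int} provide none: \eqref{eq:Br} is a purely spatial bound, uniform in time, and $\mathcal Q$ is only assumed bounded in time; moreover the constant in \eqref{eq:mainpara'int} is asserted to depend on $p,q,r,\delta$ only. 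Your proposed remedy --- invoking \eqref{eq:dtphi}--\eqref{eq:dtDphi}, hence $\partial_t\eta\in L^{3}(\mathcal I;W^{1,q_0}(\omega))$ --- imports a hypothesis absent from the statement and makes the constant depend on a norm of $\partial_t\phi$, so at best it proves a weaker lemma (this extra information is indeed available where the lemma is applied along the blow-up sequence, cf.\ \eqref{eq:dtphim} and the remark following Lemma \ref{Lemma}, but then it must be added to the statement, not assumed silently). Your closing claim that ``each subsequent iteration inherits improved time regularity from the previous application of Lemma \ref{lem:31int}'' does not repair this: what improves along the bootstrap is the spatial integrability of $\partial_t\overline\bfu$, while the problematic term $\partial_t\bfB_\phi^\top:(\nabla\zeta_k\otimes\overline\bfu)$ reappears unchanged at every step; note also that the Piola identity $\Div\bfB_\phi^\top=0$ only rewrites $\overline h_k=\Div(\zeta_k\bfB_\phi^\top\overline\bfu)$ and does not remove the time derivative of the coefficient. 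To close the argument you must either add the time-regularity hypothesis on $\phi$ (with the corresponding dependence of the constant) or restructure the localisation so that the constraint coefficient is never differentiated in time; as written, the first application of Lemma \ref{lem:31int} is not justified under the stated assumptions.
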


\subsection{The blow-up lemma}
We consider a boundary suitable weak solution $(\overline\bfu,\overline\pi)$ to the perturbed Navier--Stokes system \eqref{momref}--\eqref{divref} with forcing $\overline\bff$ as defined in Definition
\ref{def:weakSolutionflat}.
We define
 the excess-functional as
\begin{align*}
\mathscr E_r^{t_0,x_0} (\overline\bfu,\overline\pi)  &:=  \dashint_{Q^+_{r}(t_0,x_0)} |\overline\bfu|^3 \dy\ds+ r^3\bigg(\dashint_{Q^+_{r}(t_0,x_0)}  |\overline\pi - (\overline\pi)_{\mathcal B^+_r(x_0)}|^{5/3} \dy\ds\bigg)^{\frac{9}{5}}
\end{align*}
and use the short-hand notation $\mathscr E_r(\overline\bfu,\overline\pi)=:\mathscr E_r^{0,0} (\overline\bfu,\overline\pi)$.
The following lemma is the bulk of the partial regularity proof and the claim of Theorem \ref{thm:main} will follow in a standard manner.
\begin{lemma}
\label{Lemma}
 Suppose that $\eta\in C(\overline{\mathcal I}_1;B^{\theta}_{p,p}\cap W^{2,2}(\omega))$ for some $p>\frac{15}{4}$ and some $\theta>2-\frac{1}{p}$, that $\sup_I \mathrm{Lip}(\partial\Omega_{\eta(t)})$ is sufficiently small and that \eqref{eq:L0} holds.\footnote{Note that under this conditions $B^{\theta}_{p,p}(\omega)\hookrightarrow C^{1}(\omega)$.} Suppose further that $\partial_t\eta\in L^{3}(\mathcal I; W^{1,q_0}(\omega))$ for some $q_0>2$
and $\overline\bff\in L^{p}(\mathcal I_1;L^{p}(\mathcal B_1^+))$. 
For any $\tau\in(0,1/2)$ there exist constants
$\varepsilon>0$ (small) and $C_\ast>0$ (large) such the following implication is true for any
tuple $(\overline\bfu,\overline\pi)$ which is a boundary suitable weak solution to \eqref{momref}--\eqref{divref} in $\mathcal Q_1^+$ in the sense of Definition \ref{def:weakSolution}: Suppose that
\begin{equation}
\label{I1}
\mathscr E_1(\overline\bfu,\overline\pi) + 
\bigg(\dashint_{Q^+_1}  |\overline\bff|^{p} \dy\ds\bigg)^{\frac{3}{p}}
\leq\varepsilon,
\end{equation}
then we have
\begin{equation}
\label{I2}
\mathscr E_\tau (\overline\bfu,\overline\pi) \le C_\ast \tau^{2\alpha} \bigg(\mathscr E_1 (\overline\bfu,\overline\pi) +\bigg(\dashint_{Q^+_1}  |\overline\bff|^{p} \dy\ds\bigg)^{\frac{3}{p}}\bigg),
\end{equation}
where $\alpha=3\big(\frac{2}{5}-\frac{3}{2p}\big)$.
\end{lemma}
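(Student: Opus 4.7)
\textbf{Plan for Lemma \ref{Lemma}.} I would argue by contradiction in the standard blow-up fashion. Suppose that for some $\tau \in (0,1/2)$ the statement fails. Then there exist a sequence of boundary functions $\phi_n$ (and orthogonal matrices $\mathcal Q_n$) satisfying the standing hypotheses uniformly, forcings $\overline\bff_n$ and boundary suitable weak solutions $(\overline\bfu_n, \overline\pi_n)$ to the perturbed system \eqref{momref}--\eqref{divref} on $Q_1^+$ with
\[
\lambda_n^3 := \mathscr E_1(\overline\bfu_n, \overline\pi_n) + \bigg(\dashint_{Q_1^+} |\overline\bff_n|^p\dy\ds\bigg)^{3/p} \longrightarrow 0, \qquad \mathscr E_\tau(\overline\bfu_n, \overline\pi_n) > C_\ast\, \tau^{2\alpha}\, \lambda_n^3,
\]
where $C_\ast$ will eventually be chosen larger than a universal constant coming from Stokes regularity. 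I rescale by setting $\bfv_n := \overline\bfu_n/\lambda_n$, $q_n := \overline\pi_n/\lambda_n$, $\bfg_n := \overline\bff_n/\lambda_n$. By construction $\mathscr E_1(\bfv_n,q_n) \le 1$, $\|\bfg_n\|_{L^p(Q_1^+)} \lesssim 1$, and $(\bfv_n, q_n)$ satisfies the same perturbed system with convective term carrying a factor $\lambda_n \to 0$.

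The next step is to extract additional uniform estimates on the rescaled sequence. The local energy inequality \eqref{energylocal} (rescaled) yields $\bfv_n$ uniformly bounded in $L^\infty_t L^2_x \cap L^2_t W^{1,2}_x$ on interior subcylinders; here the extra time-derivative terms from the moving boundary are absorbed thanks to \eqref{eq:dtQ}, \eqref{eq:dtphi} together with the assumption $\partial_t\eta \in L^3_t W^{1,q_0}_y$. From the uniform bounds on $\phi_n$ in $B^{\theta}_{p,p} \cap W^{2,2} \cap C^1$ and on $\partial_t\phi_n$ in $L^3 W^{1,q_0}$, I can pass to a (relabelled) subsequence with $\phi_n \to \phi_\infty$ strongly in $C^0_{t,y}$ and weakly in the underlying fractional/Sobolev spaces, and similarly $\mathcal Q_n \to \mathcal Q_\infty$ uniformly. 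Hence the coefficients $J_{\phi_n}, \bfA_{\phi_n}, \bfB_{\phi_n}$ converge strongly enough for the lower-order error terms in \eqref{eq:4.11} (viewed for the rescaled equation) to converge.

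Passing to the limit gives that $(\bfv_\infty, q_\infty)$ is a distributional solution of a perturbed \emph{Stokes} system of the form \eqref{eq:pertstokes} with limit coefficients $\phi_\infty, \mathcal Q_\infty$, additional drift $\partial_t(\bfPsi_\infty \circ \mathscr V_\infty^{-1})\circ \bfPhi_\infty$ and right-hand side built from $\bfg_\infty := \text{weak-}L^p\lim \bfg_n$; the convective term $\lambda_n \bfB_{\phi_n} \nabla \bfv_n\, \bfv_n$ vanishes in the limit because of the extra factor $\lambda_n$. Since $\phi_\infty$ still enjoys the smallness \eqref{eq:Br} via \eqref{eq:Br0}, the maximal regularity Lemma \ref{lem:31} (with a sufficiently large integrability exponent, tied to $p>15/4$) applies and gives $\bfv_\infty \in L^p_t W^{2,p}_x \cap W^{1,p}_t L^p_x$ and $q_\infty \in L^p_t W^{1,p}_x$ on interior subcylinders. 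The parabolic Morrey embedding, together with $\bfv_\infty|_{\partial\mathbb H} = 0$, yields the pointwise Hölder decay
\[
|\bfv_\infty(t,x)| \lesssim r^{2\alpha/3}, \qquad |q_\infty(t,x)-(q_\infty)_{\mathcal B_r^+}| \lesssim r^{2\alpha/3} \quad\text{in } Q_r^+,
\]
where the exponent is exactly $\alpha = 3(\tfrac{2}{5} - \tfrac{3}{2p})$; integrating these and using the definition of $\mathscr E_r$ yields
\[
\mathscr E_\tau(\bfv_\infty, q_\infty) \le C_0 \,\tau^{2\alpha}
\]
with a constant $C_0$ depending only on the structural parameters.

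Finally, I close the argument by upgrading weak convergence to strong convergence on the cylinder $Q_\tau^+$: from the uniform bounds on $\bfv_n$ in $L^\infty_t L^2_x \cap L^2_t W^{1,2}_x$ and on $\partial_t \bfv_n$ in a negative-order space (via the rescaled momentum equation), the Aubin–Lions lemma gives $\bfv_n \to \bfv_\infty$ strongly in $L^3(Q_\tau^+)$, and an elliptic pressure estimate applied to the reconstructed pressure equation yields $q_n \to q_\infty$ strongly in $L^{5/3}(Q_\tau^+)$. Consequently $\mathscr E_\tau(\bfv_n,q_n) \to \mathscr E_\tau(\bfv_\infty, q_\infty) \le C_0\, \tau^{2\alpha}$; choosing $C_\ast > C_0$ from the outset contradicts $\mathscr E_\tau(\bfv_n,q_n) > C_\ast \tau^{2\alpha}$. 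The hardest step, in my view, is the strong convergence of the pressure sequence $q_n$ together with the simultaneous passage to the limit in the perturbation coefficients $\bfA_{\phi_n}, \bfB_{\phi_n}$ and their time derivatives; this is what forces the combined use of Sobolev-multiplier bounds \eqref{eq:Br0}, the time regularity assumption $\partial_t\eta \in L^3_t W^{1,q_0}_y$, and the maximal regularity estimate of Lemma \ref{lem:31}. The passage from this single-scale decay to full Hölder continuity near $(t_0,x_0)$ (and thus Theorem \ref{thm:main}) is then an iteration standard in the partial regularity literature.
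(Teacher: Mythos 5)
Your blow-up skeleton coincides with the paper's: contradiction, rescaling by $\lambda_n$, uniform energy bounds from the transformed local energy inequality (with the moving-boundary term absorbed via \eqref{eq:dtQ}, \eqref{eq:dtphi} and $\partial_t\eta\in L^3_tW^{1,q_0}_y$), Aubin--Lions compactness for the velocities, and passage to a limiting perturbed Stokes system to which the linear theory of Section \ref{sec:pert} is applied. However, your closing step contains a genuine gap. You claim that ``an elliptic pressure estimate applied to the reconstructed pressure equation yields $q_n\to q_\infty$ strongly in $L^{5/3}(Q^+_\tau)$'' and then conclude $\mathscr E_\tau(\bfv_n,q_n)\to\mathscr E_\tau(\bfv_\infty,q_\infty)$. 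At the available regularity this is not justified: the pressure of the rescaled system is coupled to $\partial_t\bfv_n$, which is only bounded in $L^{6/5}(\mathcal I_{3/4};W^{-2,2}(\mathcal B^+_{3/4}))$, and no compactness mechanism for $q_n$ is provided; in blow-up arguments of this type the pressures converge only weakly (cf. \eqref{I8}), and the pressure contribution to the excess at scale $\tau$ is precisely the delicate part. The paper does not prove strong pressure convergence; instead, following \cite{Br2}, it applies the perturbed-Stokes regularity lemmas of Section \ref{sec:pert} to the rescaled equations \eqref{eq:eq} for finite $m$ as well as to the limit equation \eqref{eq:eqlimit}, using the uniform $L^p$ bound on $\overline\bfg_m$ with $p>15/4$ (this is where $\alpha=3(\tfrac25-\tfrac{3}{2p})>0$ comes from) to obtain the $\tau^{2\alpha}$ decay of both the velocity part and the $\tau^3$-weighted pressure part of $\mathscr E_\tau(\bfv_m,q_m)$ directly, which contradicts \eqref{I7} for large $m$. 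Without this (or an equivalent pressure decomposition into a part that decays by linear theory and a part vanishing with $\lambda_m$), your contradiction does not close.

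Two secondary points. First, for the limit pair you invoke Lemma \ref{lem:31}, which is the initial--boundary value problem with zero data at $t=-1$; the limit $(\bfv_\infty,q_\infty)$ only solves the system locally in $Q^+_{3/4}$, so the correct tools are the local estimates of Lemma \ref{lem:32} (and Lemma \ref{lem:32int} for interior points), possibly after a bootstrap starting from $\nabla\bfv_\infty\in L^2$ and $q_\infty\in L^{5/3}$. Second, you retain the drift $\partial_t(\bfPsi_\infty\circ\mathscr V_\infty^{-1})\circ\bfPhi_\infty$ in the limit equation, whereas in the paper's rescaled equation \eqref{eq:eq} this term carries the factor $\lambda_m$ and vanishes in the limit (by \eqref{eq:regvm} and \eqref{eq:dtPsim}); if the drift really survived, Lemmas \ref{lem:31}--\ref{lem:32int} as stated contain no drift term, so you would have to absorb it into the right-hand side and justify the integrability needed for the decay estimate, which your sketch does not do.
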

\begin{remark}
The proof of Lemma \ref{Lemma} is based on Lemmas \ref{lem:31}--\ref{lem:32int} which hold under relaxed assumption on the time-regularity of $\eta$ compared to Theorem \ref{thm:stokesunsteadymoving}. However, we still have to control the time derivative of local charts from Section \ref{sec:repara} and, in particular, \eqref{eq:dtphi} for a sequence of functions. Hence we require the same condition.
\end{remark}
\begin{proof}[Proof of Lemma \ref{Lemma}.] We argue by contradiction. Suppose there is $\tau \in (0,\tfrac{1}{2})$,  a sequence
$(\eta_m)$ such that
 uniformly in $m$ (for some sufficiently small $\delta>0$, $L_0\in(0,L)$ and $\kappa_0>0$ with $\bfvarphi_{\eta_m}$ given by \eqref{eq:bfvarphi})
\begin{align}\label{eq:etam}
\eta_m \in C(\overline{\mathcal I}_1;&B^{\theta}_{p,p}\cap W^{2,2}(\omega))\cap W^{1,3}(\mathcal I_1;W^{1,q_0}(\omega)),\quad \sup_{\mathcal I_1} \mathrm{Lip}(\partial\Omega_{\eta_m(t)})\leq\delta,\\
\label{eq:L0m}
&\|\eta_m\|_{L^\infty(\mathcal I_1\times \omega)}\leq L_0,\quad \inf_{\mathcal I_1\times\omega}\partial_1\bfvarphi_{\eta_m}\times\partial_2\bfvarphi_{\eta_m}\geq \kappa_0,
\end{align} 
and a sequence of boundary suitable weak solution $(\overline\bfu_m,\overline\pi_m)$ to the perturbed system in the sense of Definition \ref{def:weakSolutionflat} such that 
\begin{align}
\label{I3}
\lambda_m^3:=& \mathscr E_1(\overline\bfu_m,\overline\pi_m)+\|\overline\bff\|^3_{ L^{p}(\mathcal I_1;L^{p}(\mathcal B_1^+))} \to 0, \quad \ m \to \infty, \\
\label{I4}
&\mathscr E_\tau(\overline\bfu_m,\overline\pi_m) >  \tfrac{1}{2} \lambda^3_m .
\end{align}
Clearly, \eqref{eq:etam} implies
\begin{align}\label{eq:etam'}
\begin{aligned}
&\eta_m\rightharpoonup^\ast\eta\quad\text{in}\quad  L^\infty(\mathcal I_1;B^{\theta}_{p,p}\cap W^{2,2}(\omega))\quad \sup_{\mathcal I_1} \mathrm{Lip}(\partial\Omega_{\eta(t)})\leq\delta,\\
 &\qquad\qquad\partial_t\eta_m\rightharpoonup\partial_t\eta\quad\text{in}\quad L^{3}(\mathcal I_1;W^{1,q_0}(\omega)),
\end{aligned}
\end{align}for a limit function $\eta$, at least after taking a subsequence. By interpolation,
this yields
\begin{align}\label{eq:etam''}
\eta\in  C(\overline{\mathcal I_1};B^{\theta'}_{p,p}(\omega))
\end{align}
for any $\theta'<\theta.$ 
In accordance with \eqref{eq:phi} we can compute $\phi_m$ deducing that uniformly in time (decreasing the value of $\delta$ if necessary)
\begin{align}\label{eq:varphim}
\|\phi_m\|_{W^{1,\infty}_y}+ \|\phi_m\|_{\mathcal M^{2-1/p,p}(\R^{2})}\leq \delta.
\end{align}
We obtain from \eqref{eq:varphim}
\begin{align}\label{eq:varphim2}
\nabla\phi_m\rightharpoonup^\ast\nabla\phi\quad \text{in}\quad L^{\infty}(\mathcal I_1\times\R^{2}),\quad \phi\in L^\infty(\mathcal I_1,\mathcal M^{2-1/p,p}(\R^{2})(\delta)),\quad \|\phi\|_{W^{1,\infty}_y}\leq \delta.
\end{align}
Hence \eqref{est:ext} and \eqref{eq:MS} yields
\begin{align}\label{eq:Phim2}
\nabla\bfPhi_m\rightharpoonup^\ast\nabla\bfPhi\quad \text{in}\quad L^{\infty}(\mathcal I_1\times\R^3),\quad \bfPhi\in L^\infty(\mathcal I_1,\mathcal M^{2,p}(\R^3)),
\end{align}
where $\bfPhi_m$ and $\bfPhi$ are defined in accordance with \eqref{eq:Phi}. Finally, \eqref{eq:detJ} and \eqref{eq:SMPhiPsi} imply for the inverse functions of $\bfPhi_m$ and $\bfPhi$
\begin{align}\label{eq:Psim2}
\nabla\bfPsi_m\rightharpoonup^\ast\nabla\bfPsi\quad \text{in}\quad L^{\infty}(\mathcal I_1\times\R^3),\quad \bfPsi\in L^\infty(\mathcal I_1,\mathcal M^{2,p}(\R^3)).
\end{align}
 Thanks to \eqref{eq:dtphi} and \eqref{eq:etam} we can control the time derivatives of $\phi_m$, $\bfPhi_m$ and $\bfPsi_m$. In particular, we have uniformly in $m$
\begin{align}\label{eq:dtphim}
\partial_t\phi_m\in L^3(\mathcal I_1;L^\infty(\mathcal B_1^+))
\end{align}
and thus
\begin{align}\label{eq:dtPsim}
\partial_t(\bfPsi_m\circ\mathscr V_m)\in L^3(\mathcal I_1;L^\infty(\mathcal B_1^+))
\end{align}
by \eqref{eq:dtQ}, \eqref{est:ext} and \eqref{eq:Psim2}.

We define in $\mathcal I_1\times\mathcal B^+_1$
\begin{align*}
\overline\bfv_m&:= \frac{1}{\lambda_m} \overline\bfu_m,\quad
\overline{\mathfrak q}_m:= \frac{1}{\lambda_m } \big[\overline\pi_m- (\overline\pi_m)_{\mathcal B_1^+}\big],\quad
\overline\bfg_m:=\frac{1}{\lambda_m}\overline\bff.
\end{align*}
We get from \eqref{I3} 
the relation
\begin{equation}
\label{I6} \dashint_{Q^+_1} |\overline\bfv_{m}|^3 \dz\ds+ \bigg(\dashint_{Q^+_1} |\overline{\mathfrak q}_{m}|^{5/3} \dz\ds\bigg)^{\frac{9}{5}}+ \bigg(\dashint_{Q^+_1} |\overline\bfg_{m}|^{p} \dz\ds\bigg)^{\frac{3}{p}} =  1,
\end{equation}
such that, after passing to a subsequence,
\begin{align}
\label{I8}
 \overline{\mathfrak{q}}_m &\rightharpoonup : \overline{\mathfrak q} \quad \text{in} \quad L^{5/3} (\mathcal I_1;L^{5/3}(\mathcal B^+_1)),\\
\overline\bfv_m &\rightharpoonup : \overline\bfv\quad\text{in}\quad  L^3(\mathcal I_1;L^3(\mathcal B^+_1))\label{I8'},\\
\overline\bfg_m &\rightharpoonup : \overline\bfg\quad\text{in}\quad  L^p(\mathcal I_1;L^p(\mathcal B^+_1))\label{I8''}.
\end{align}
Now, (\ref{I4}) reads after scaling
\begin{align}
\label{I7}
\dashint_{ Q_\tau^+} |\overline\bfv_{m} |^3 \dz\ds
 &+\tau^3\bigg(\dashint_{\mathcal I_1}\dashint_{\mathcal B_\tau^+}  |\overline{\mathfrak{q}}_m  - (\overline{\mathfrak q}_{m})_{\mathcal B_\tau^+} |^{5/3} \dz\ds\bigg)^{\frac{9}{5}}> C_{\ast} \tau^{2\alpha} ,
\end{align}
while the equation reads as
\begin{align}\label{eq:eq}
\begin{aligned}
J_{\phi_m}\partial_t\overline\bfv_m+\lambda_m\bfB_{\phi_m}\nabla\overline\bfv_m\,\overline\bfv_m&+\lambda_mJ_{\phi_m}\nabla\overline\bfv_m\partial_t(\bfPsi_m\circ\mathscr V_m^{-1})\circ\bfPhi_m\\&+\Div\big(\bfB_{\phi_m}\overline{\mathfrak q}_m\big)-\Div\big(\bfA_{\phi_m}\nabla\overline\bfv_m\big)=J_{\phi_m}\overline\bfg_m,\\
\bfB_{\phi_m}:\nabla{\overline\bfv}_m=0,\quad\overline\bfv_{m}|_{\mathcal B^+_{1}\cap\partial\mathbb H}&=0.
\end{aligned}
\end{align}
Note that the term $\lambda_mJ_{\phi_m}\nabla\overline\bfv_m\partial_t(\bfPsi_m\circ\mathscr V_m^{-1}))\circ\bfPhi_m$ on the left-hand side does not appear in \cite{Br2} but can be estimated accordingly thanks to \eqref{eq:dtPsim}.
By \eqref{energylocal} (replacing $\zeta$ by $\zeta^2$) we have
\begin{align*}
\int_ {\mathcal B^+_1}\frac{1}{2}J_{\phi_m}\zeta^2&\big| \overline\bfv_m(t)\big|^2\dx+\int_0^t\int_ {\mathcal B^+_1}\zeta^2\bfA_{\phi_m}\nabla \overline\bfv_m:\nabla \overline\bfv_m\dx\ds\\& \leq\int_0^t\int_{\mt}\frac{1}{2} |\overline\bfv_m|^2J_{\phi_m}\nabla\zeta^2\partial_t(\bfPsi_m\circ\mathscr V_m)\circ\bfPhi_m\dx\ds+\int_0^t\int_{\mathcal B^+_1}\frac{1}{2}J_{\phi_m} |\overline\bfv_m|^2\partial_t\zeta^2\dx\ds\\&+\int_0^t\int_{\mathcal B^+_1}\frac{1}{2}J_{\phi_m}| \overline\bfv_m|^2\Delta{\bfPsi_m}\circ\bfPhi_m\cdot\nabla\zeta^2\dx\ds+\int_0^t\int_{\mathcal B^+_1}\frac{1}{2}| \overline\bfv_m|^2\bfA_{\phi_m}:\nabla^2\zeta^2\dx\ds\\
&+\int_0^t\int_{\mathcal B^+_1}\frac{1}{2}\big(\lambda_m|\overline\bfv_m|^2+2\overline{\mathfrak q}_m\big)\overline\bfv_m\cdot\bfB_{\phi_m}\nabla\zeta^2\Big)\dx\ds+\int_0^t\int_{\mathcal B_1^+}J_{\phi_m}\zeta^2\overline\bfg_m\cdot\overline\bfv_m\dxs
\end{align*}
for all non-negative $\zeta\in C^\infty_c(\mathcal Q_1)$.  
All terms on the right-hand side can be estimate along the lines of \cite[Section 4]{Br2} but the first one.
Since $\overline{\bfv}_m\in L^3_{t,z}$ uniformly by \eqref{I7}
and $\partial_t(\bfPsi_m\circ\mathscr V_m)\in L^{3}_tL^{\infty}_z$ by \eqref{eq:dtPsim}
it is bounded as well.
We conclude
\begin{align}\label{eq:regvm}
\overline{\bfv}_m\in L^\infty\big(\mathcal I_{3/4};L^2\big(\mathcal B^+_{3/4}\big)\big)\cap L^2\big(\mathcal I_{3/4};W^{1,2}\big(\mathcal B_{3/4}^+\big)\big)
\end{align}
uniformly in $m$.
By Sobolev's inequality and \eqref{eq:eq} we have for $\bfphi\in C^\infty_c(\mathcal B_{3/4}^+)$
\begin{align*}
\int_{\mathcal B^+_1} \partial_t \overline{\bfv}_m\cdot \bfphi\dz&\leq \,c\,\Big(\|\overline{\bfv}_m\|_{L^2_z}\|\nabla\overline{\bfv}_m\|_{L^2_z}+\|\nabla\overline{\bfv}_m\|_{L^2_z}\|\partial_t(\bfPsi_m\circ\mathscr V_m^{-1})\circ\bfPhi_m\|_{L^2_z}\Big)\|\bfphi\|_{W^{2,2}_z}\\&+c\Big(\|\overline{\mathfrak{q}}_m\|_{L^{3/2}_z}+\|\nabla\overline{\bfv}_m\|_{L^2_z}+\|\overline\bfg_m\|_{L^2_z}\Big)\|\bfphi\|_{W^{2,2}_z}.
\end{align*}
This shows by \eqref{I4}, \eqref{eq:dtPsim} and \eqref{eq:regvm} the boundedness of 
\begin{align}\label{eq:dtvm}
\partial_t \overline{\bfv}_m\in L^{6/5}(\mathcal I_{3/4};W^{-2,2}(\mathcal B^+_{3/4})).
\end{align}
After passing to suitable subsequences we obtain
\begin{align}
\label{31}\partial_t \overline{\bfv}_m&\rightharpoonup \partial_t\overline{\bfv}\quad\text{in}\quad L^{6/5}\big(\mathcal I_{3/4};W^{-2,2} 
\big(\mathcal B^+_{3/4}\big)\big).
\end{align}
Now, \eqref{eq:regvm} and \eqref{eq:dtvm} imply
\begin{align}\label{eq:nablavmcomp}
\overline\bfv_m&\rightarrow \overline\bfv\quad\text{in}\quad L^3\big(\mathcal I_{3/4};L^{3}\big(\mathcal B_{3/4}^+\big)\big)
\end{align}
by the Aubin-Lions compactness theorem.
Recalling the definitions of $\bfA_{\phi_m}$ and $\bfB_{\phi_m}$ from \eqref{eq:AB} we are able to pass to the limit in \eqref{eq:eq} using the convergences \eqref{eq:Phim2}, \eqref{eq:Psim2}, \eqref{I8}, \eqref{I8'} \eqref{eq:regvm} and \eqref{31}. We obtain (in the sense of distributions on $Q^+_{3/4}$)
\begin{align}\label{eq:eqlimit}
\begin{aligned}
J_{\phi}\partial_t\overline\bfv+\Div\big(\bfB_{\phi}\overline{\mathfrak q}\big)-\Div\big(\bfA_{\phi}\nabla\overline\bfv\big)&=J_\phi\overline\bfg,\\
\bfB_{\phi}^\top:\nabla{\overline\bfv}=0,\quad\overline\bfv|_{\mathcal B^+_{3/4}\cap\partial\mathbb H}&=0.
\end{aligned}
\end{align}
Note that  $\lambda_mJ_{\phi_m}\nabla \overline\bfv_m\partial_t(\bfPsi_m\circ\mathscr V_m)\circ\bfPhi_m$ vanishes as $m\rightarrow\infty$ thanks to \eqref{eq:regvm} and \eqref{eq:dtPsim}.

Exactly as in \cite[Section 4]{Br2} (this requires the assumption $p>15/4$) we can derive a contradiction to
\eqref{I7} applying the regularity theory developed in Section \ref{sec:pert} to \eqref{eq:eq} and \eqref{eq:eqlimit}. This finishes the proof.
\end{proof}
\begin{proof}[Proof of Theorem \ref{thm:main}]
By assumption we have
\begin{align*}
r^{-2}\int_{t_0-r^2}^{t_0+r^2}\int_{\Omega_\eta\cap \mathcal B_r(x_0)}|\bfu|^3\dxt+\bigg(r^{-5/3}\int_{t_0-r^2}^{t_0+r^2}\int_{\Omega_\eta\cap \mathcal B_r(x_0)}|\pi|^{5/3}\dx\dt\bigg)^{\frac{9}{5}}<\varepsilon_0
\end{align*}
from which we deduce arguing as in \cite[Section 4]{Br2}
and applying a standard iteration procedure (see, e.g., \cite[Prop. 2.5]{EsSeSv})
\begin{align}
\widetilde{\mathscr E}_{\tau^k R}^{t_0,x_0}(\overline\bfu,\overline\pi)\lesssim \tau^{2\alpha k},\label{decay1}
\end{align}
where the access $\widetilde{\mathscr E}$ is given by
\begin{align*}
\widetilde{\mathscr E}_{r}^{t_0,x_0} (\overline\bfu,\overline\pi)  &:=  \dashint_{Q_{r}(t_0,x_0)\cap (I\times\Omega)} |\overline\bfu-(\overline\bfu)_{Q_{r}(t_0,x_0)\cap (I\times\Omega)}|^3 \dy\ds\\&+ r^3\bigg(\dashint_{\mathcal I_r(t_0)}\dashint_{\mathcal B^+_{r}(x_0)}  |\overline\pi - (\overline\pi)_{\mathcal B_r(x_0)\cap\Omega}|^{5/3} \dy\ds\bigg)^{\frac{9}{5}}
\end{align*}
and $\overline\bfu:=\bfu\circ \mathscr V\circ\bfPhi$ and $\overline\pi:=\pi\circ \mathscr V\circ\bfPhi$ solved the perturbed system, cf. Section. \ref{sec:pert}.
The decay estimate \eqref{decay1} holds in the centre point $(t_0,x_0)=(0,0)$ of $Q_1^+$ but also for points which are sufficiently close.
Furthermore, we can prove a version of Lemma \ref{Lemma} for interior points (one must replace in the proof Lemmas \ref{lem:31} and \ref{lem:32} by their interior counterparts Lemmas \ref{lem:31int} and \ref{lem:32int}). This finally shows
$\overline\bfu\in C^{0,\alpha}$ in a neighbourhood of $(0,0)$. Changing coordinates and using that $\bfu=\overline\bfu\circ\bfPsi\circ\mathscr V_m$, where $\bfPsi$ and $\mathscr V_m$ are Lipschitz uniformly in time, cf. \eqref{eq:detJ}) and \eqref{eq:V}, proves $\bfu\in C^{0,\alpha}(\overline{\mathcal U}(t_0,x_0)\cap\overline{\mathcal I}\overline{\Omega}_\eta)$ in a neighbourhood $\mathcal U(t_0,x_0)$ of $(t_0,x_0)$.
This finishes the proof of Theorem \ref{thm:main}.\end{proof}

%

\section*{Compliance with Ethical Standards}\label{conflicts}
\smallskip
\par\noindent
{\bf Conflict of Interest}. The author declares that he has no conflict of interest.

\smallskip
\par\noindent
{\bf Data Availability}. Data sharing is not applicable to this article as no datasets were generated or analysed
during the current study.

\end{document}